\documentclass[10pt]{amsart}
\addtolength\headheight{4pt}
\usepackage{amsmath,amssymb,amsthm}

\usepackage{float}
\usepackage[all,cmtip]{xy}
\usepackage{tikz}
\usetikzlibrary{matrix}
\usetikzlibrary{cd}
\usepackage{subfig}
\usepackage[colorlinks,hyperindex,linkcolor=blue]{hyperref}
\hypersetup{
 pdfauthor = {Luis Hernández-Corbato and Javier Martínez-Aguinaga},
 pdftitle= {On the Topology of loops of contactomorphisms and Legendrians in non-orderable manifolds},
 pdfsubject = {Differential Geometry, Geometric Topology, h-principle}}

\setlength\parindent{0pt}
\setlength{\oddsidemargin}{5pt} \setlength{\evensidemargin}{5pt}
\setlength{\textwidth}{440pt}
\setlength{\topmargin}{-50pt}
\setlength{\textheight}{24cm}
\setlength{\parskip}{2.5mm plus0.4mm minus0.4mm}

\newcommand{\R}{{\mathbb{R}}}

\newcommand{\NS}{{\mathbb{S}}}

\newtheorem{proposition}{Proposition}
\newtheorem{theorem}[proposition]{Theorem}
\newtheorem{definition}[proposition]{Definition}
\newtheorem{lemma}[proposition]{Lemma}

\newtheorem{corollary}[proposition]{Corollary}
\newtheorem{remark}[proposition]{Remark}

\makeatletter
\newcommand{\superimpose}[2]{%
  {\ooalign{$#1\@firstoftwo#2$\cr\hfil$#1\@secondoftwo#2$\hfil\cr}}}
\makeatother

\title{On the topology of loops of contactomorphisms and Legendrians in non-orderable manifolds}

\subjclass[2020]{Primary: 53D10. Secondary: 	53D22, 53D35.}

\author{Luis Hernández-Corbato}
\address{Universidad Complutense de Madrid, Departamento de Álgebra, Geometría y Topología, Facultad de Ciencias
Matemáticas. 28040 Madrid, Spain}
\email{luishcorbato@mat.ucm.es}

\author{Javier Mart\'{i}nez-Aguinaga}
\address{Universidad Complutense de Madrid, Departamento de Álgebra, Geometría y Topología, Facultad de Ciencias
Matemáticas, and Instituto de Ciencias Matemáticas CSIC-UAM-UC3M-UCM, C. Nicolás Cabrera, 13-15.
28049 Madrid, Spain}
\email{frmart02@ucm.es}

%\author{Álvaro del Pino}
%\address{Dirección 3}
%\email{Email 3}

%%%% Estas 4 líneas evitan que se corten palabras a final de línea, evitando así errores como cortar sílabas.
\tolerance=1
\emergencystretch=\maxdimen
\hyphenpenalty=10000
\hbadness=10000

\begin{document}

\begin{abstract}

We study the global topology of the space $\mathcal L$ of loops of contactomorphisms of a non-orderable closed contact manifold $(M^{2n+1}, \alpha)$. We filter $\mathcal L$ by a quantitative measure of the ``positivity'' of the loops and describe the topology of $\mathcal L$ in terms of the subspaces of the filtration. In particular, we show that the homotopy groups of $\mathcal L$ are subgroups of the homotopy groups of the subspace of positive loops $\mathcal L^+$.
We obtain analogous results for the space of loops of Legendrian submanifolds in $(M^{2n+1}, \alpha)$.

\end{abstract}
\maketitle
%\tableofcontents
\addtocontents{toc}{\setcounter{tocdepth}{1}}

\section{Introduction}\label{intro}

\subsection{Context and motivation}

The notion of orderability (respectively, non-orderability) was introduced by Eliashberg and Polterovich in \cite{EP}. It is a relevant property in the study of groups of contactomorphisms and contact manifolds with remarkable topological implications and relations to several other problems such as the squeezing of contact domains \cite{EKP}.

The classification of contact manifolds into orderable or non-orderable has been the subject of a significant number of articles \cite{AM, Bh, CP, CPS, EKP, EP, CMM, We}. Likewise, the study of orderability for Legendrian isotopy classes and the existence of positive loops of Legendrians has been extensively studied as well \cite{CCR, CN1, CN2, CFP, Liu, PPP}. %$ \cite{CS, Sandon2, FPR, Nakamura} for its connection with the existence of different metrics on the spaces of contactomorphisms and Legendrians, \cite{AFM} for the relation between orderability and the Weinstein conjecture or the more recent work \cite{ALAr} for its relation with the existence of spectral selectors.

%Eliashberg, Kim and Polterovich  showed that orderability is closely related to the squeezing problem of contact domains \cite{EKP}, subsequently explored in a number of articles \cite{Sandon, AM}. Albers, Fuchs and Marry \cite{AFM} showed that orderability of $(M,\xi)$ implies that the Weinstein conjecture holds on it. Furthermore, see the work of Colin and Sandon \cite{CS} for the relation between orderability and the existence of bi-invariant metrics on the space of Legendrians, as well as the recent work of Allais and Arlove \cite{ALAr} for the relation between orderability and the existence of spectral selectors.

From the work of Eliashberg and Polterovich \cite{EP}, a cooriented closed contact  manifold $(M, \alpha)$ is orderable if and only if it does not admit a contractible positive loop of contactomorphisms. As a consequence, in every non-orderable closed contact manifold we can always construct a homotopy between an arbitrary loop of contactomorphisms and a positive loop. Nonetheless, as proved by Eliashberg, Kim and Polterovich in \cite{EKP}, 
a contractible positive loop  of contactomorphisms may not be homotopic to the constant loop through positive loops. In the standard tight contact sphere $\left(\NS^{2n-1}, \xi_{std}\right)$, $2n\geq 4$,  there is a contractible positive loop such that every homotopy to the constant loop has an associated Hamiltonian ``sufficiently'' negative somewhere. 

In view of \cite{EKP}, positive loops of contactomorphisms homotopic as loops may well not be homotopic within the space of positive loops. Therefore, in non-orderable manifolds the topology of the space of loops of contactomorphisms $\mathcal L$ and its subspace of positive loops $\mathcal L^+$ can differ. More generally, this indicates that the topology of the space of loops is sensitive to imposing lower bounds on their Hamiltonians; i.e. the topology of the subspaces of loops with Hamiltonians bounded from below by a given constant $c\in\R$ is generally different from that of the entire space of loops $\mathcal L$. %This note discusses the  the global topology of the space of loops of contactomorphisms of a non-orderable closed contact manifold $(M, \xi)$, relating it to the topology of the subspace of positive loops, as well as to many other proper subspaces conforming a filtration of $\mathcal L$. 
This note elaborates on the global topology of $\mathcal L$ by establishing connections with the topology of the subspaces of a uniparametric filtration of $\mathcal L$ and, in particular, with the subspace of positive loops $\mathcal L^+$.

\subsection{Statement of the main results}
 We first introduce the power of a loop as a real number that measures the minimum ``positivity'' of its associated Hamiltonian  (Definition \ref{Def:power}). This quantitative measure induces a filtration in $\mathcal L$, the space of loops of contactomorphisms (based at the identity),  by considering, for each fixed $c \in \mathbb R$, the subspace $\mathcal L^c$ of loops of power greater than $c$:
 
 \[
\mathcal{L}^c := \big\lbrace \{\phi_t\} \in \mathcal{L} : pow\left(\{\phi_t\}\right) > c \big\rbrace.
\]

Note that $\mathcal L$ can be seen as the direct limit of $\mathcal L^c$ as $c \to -\infty$. %Our goal is to relate the global topology of $\mathcal L$ to the topology of the smaller subspaces $\mathcal L^c$ in this filtration and, as a particular instance, to the space of positive loops $\mathcal L^+$. \textcolor{blue}{(Luis) Yo quitaría esta frase porque expresa lo mismo que la anterior a la subsec 1.2} 

For $c > c'$, the natural inclusions $\mathcal L^c \hookrightarrow \mathcal L^{c'} \hookrightarrow \mathcal L$ induce homomorphisms between homotopy groups 

\[i_{c',c} \colon \pi_d(\mathcal L^c) \to \pi_d(\mathcal L^{c'})\quad \text{ and }\quad i_c \colon \pi_d(\mathcal L^c) \to \pi_d(\mathcal L).\]

As mentioned above, in a non--orderable closed contact manifold every loop of contactomorphisms is homotopic to a positive loop or even to a loop of arbitrarily high power. The idea is that we can construct loops of arbitrarily high power by repeatedly concatenating a positive loop, and that the composition of a given loop of contactomorphisms with a powerful loop becomes positive or can even reach a specified power. Since the positive loop is contractible, the previous procedure can be realized through a continuous deformation.

However, to prove that the maps $i_c$ are surjective for every $c \in \mathbb R$ we must control the power of the loops that the basepoint goes through under the homotopy above. Theorem \ref{prop:verypositiveloop} provides the necessary technical result, we construct homotopies between the constant loop and powerful loops in such a way that the power of the involved loops is bounded from below. Without this type of bound, we would not be able,
for example, to guarantee the existence of a homotopy of positive loops between two homotopic loops
of arbitrarily large power.

 On the contrary, the example from \cite{EKP} shows that the maps $i_c$ and $i_{c',c}$ are generally non-injective. In particular, it follows that there are infinitely many values $c, c'\in\mathbb{R}$ for which the maps $\mathcal L^c \hookrightarrow \mathcal L$ (respectively, $\mathcal L^c \hookrightarrow \mathcal{L}^{c'}$) fail to be weak homotopy equivalences. Nonetheless, we can still establish a meaningful relationship at the level of homotopy groups: $i_c$ turns out to be an isomorphism onto $\pi_d(\mathcal L)$ when restricted to the subgroup of $\pi_d(\mathcal L^c)$ composed of classes represented by loops of sufficiently high power. The precise statement reads as follows.

\begin{theorem}\label{thm:intro}
For any $c_0 \in \mathbb R$ there exists $c_1 \ge c_0$ such that the following two statements hold:

\begin{itemize}

  \item $i_{c_0}$ restricts to an isomorphism between $i_{c_0,c_1}(\pi_d(\mathcal L^{c_1}))$ and $\pi_d(\mathcal L)$. In particular, $i_{c_0}$ is surjective and $\pi_d(\mathcal L)$ is isomorphic to a subgroup of $\pi_d(\mathcal L^{c_0})$.

    \item Furthermore, $i_{c_0,c_1}(\pi_d(\mathcal L^{c_1}))$ can be represented by classes of arbitrarily high power. More precisely, for all $c \ge c_1$, $i_{c_0,c}(\pi_d(\mathcal L^c))$ coincides with $i_{c_0,c_1}(\pi_d(\mathcal L^{c_1}))$ for any choice of basepoint of sufficiently large power $c^*$.
    
\end{itemize}
\end{theorem}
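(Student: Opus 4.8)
The plan is to exploit two compatible algebraic structures on $\mathcal{L}$. On one hand, concatenation: starting from the Eliashberg–Polterovich contractible positive loop of the non-orderable $M$ and concatenating it with itself many times yields contractible loops of arbitrarily large power. On the other hand, pointwise multiplication, which makes $\mathcal{L}$ a topological group and under which the power is superadditive up to a correction bounded on compact families, $\operatorname{pow}(\gamma_1\gamma_2)\ge\operatorname{pow}(\gamma_1)+\operatorname{pow}(\gamma_2)-C(\gamma_1)$. The basic move is \emph{stabilization}: multiplying a continuous family $f\colon K\to\mathcal{L}$ over a compact base $K$ by a contractible loop $\Theta$ of sufficiently large power produces a family $\Theta\,f(\cdot)$ that is homotopic to $f$ in $\mathcal{L}$ (contract $\Theta$) but whose values all have power $>c_0$, because $\operatorname{pow}\circ f$ is bounded below on $K$. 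The difficulty is that stabilization pushes the basepoint from $\one$ to $\Theta$, and Theorem \ref{prop:verypositiveloop} is exactly the device needed to undo this inside the filtration: it provides, for every target power, a contractible loop $\Theta$ of that power together with a contraction of $\Theta$ that stays in $\mathcal{L}^{c_0}$, and $c_1$ is the threshold this construction determines, which we enlarge freely so that $c_1\ge\max(0,c_0)$ and so that $c_1$ dominates the correction constants appearing below. (Throughout I take $\mathcal{L}^c$ based at a contractible loop of power $>c$, equal to $\one$ when $c<0$; the bookkeeping for $c\ge0$ is the same after this replacement.)

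Granting this, surjectivity of $i_{c_0}$ is immediate. Given $[\sigma]\in\pi_d(\mathcal{L})$ represented by $\sigma\colon S^d\to\mathcal{L}$, choose a contractible $\Theta$ of power large relative to the compact set $\sigma(S^d)$ so that $\Theta\,\sigma(\cdot)\colon S^d\to\mathcal{L}^{c_0}$, together with a contraction $\rho$ of $\Theta$ in $\mathcal{L}^{c_0}$. Changing basepoint along $\rho$ turns $[\Theta\,\sigma(\cdot)]$ into a class of $\pi_d(\mathcal{L}^{c_0})$, and since $\rho$ is also a contraction of $\Theta$ in $\mathcal{L}$, its image under $i_{c_0}$ is $[\Theta\,\sigma]=[\sigma]$. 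Letting $\operatorname{pow}(\Theta)$ be as large as we please, the same computation gives $i_{c_0}(G_c)=\pi_d(\mathcal{L})$ for every $c\ge c_0$, where I write $G_c:=i_{c_0,c}(\pi_d(\mathcal{L}^c))\subseteq\pi_d(\mathcal{L}^{c_0})$.

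The crux is injectivity of $i_{c_0}$ on $G_{c_1}$. Let $g=i_{c_0,c_1}([\tau])$ with $\tau\colon S^d\to\mathcal{L}^{c_1}$, and suppose $i_{c_0}(g)=i_{c_1}([\tau])=0$, so $\tau$ bounds a disk $\bar\tau\colon D^{d+1}\to\mathcal{L}$. Choosing a contractible $\Theta$ of power large relative to $\bar\tau(D^{d+1})$, with a contraction $\rho\subset\mathcal{L}^{c_0}$, the disk $\Theta\,\bar\tau(\cdot)$ lies in $\mathcal{L}^{c_0}$ and is bounded by $\Theta\,\tau(\cdot)$, so $[\Theta\,\tau]=0$ in $\pi_d(\mathcal{L}^{c_0})$. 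To conclude I would identify $[\Theta\,\tau]$ with $g$: the homotopy $s\mapsto\rho(s)\,\tau(\cdot)$ joins $\tau$ to $\Theta\,\tau$, and — this is where $c_1$ must be taken large — it remains in $\mathcal{L}^{c_0}$ since $\operatorname{pow}(\rho(s)\,\tau(x))\ge\operatorname{pow}(\rho(s))+\operatorname{pow}(\tau(x))-C(\rho(s))>c_0+c_1-C(\rho(s))\ge c_0$, the correction being bounded along the fixed contraction $\rho$. Transporting through the basepoint change along $\rho$ gives $g=[\Theta\,\tau]=0$. Combined with the previous paragraph, $i_{c_0}|_{G_{c_1}}$ is an isomorphism onto $\pi_d(\mathcal{L})$; hence $i_{c_0}$ is onto and $\pi_d(\mathcal{L})\cong G_{c_1}\le\pi_d(\mathcal{L}^{c_0})$, which is the first bullet.

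For the second bullet, $G_c\subseteq G_{c_1}$ whenever $c\ge c_1$ by $\mathcal{L}^c\hookrightarrow\mathcal{L}^{c_1}$; conversely, given $g=i_{c_0,c_1}([\tau])\in G_{c_1}$, multiplying a representative by a contractible $\Theta$ of power $>c-c_1$ gives $\Theta\,\tau(\cdot)\colon S^d\to\mathcal{L}^c$, and the homotopy $\rho(s)\,\tau(\cdot)$ as above shows $i_{c_0,c}([\Theta\,\tau])=g$, so $g\in G_c$; thus $G_c=G_{c_1}$ for all $c\ge c_1$. The independence of the base loop of sufficiently large power follows by the same device: any two contractible loops of power $>c^*$ can be joined inside $\mathcal{L}^c$ (join each to its product with a hugely powerful contractible loop, then observe the two products differ by the product of that loop with a contractible — hence null-homotopic — loop of low power, whose contraction can be multiplied in without leaving $\mathcal{L}^c$), and since these paths already lie in $\mathcal{L}^c\subset\mathcal{L}^{c_0}$ the corresponding subgroups of $\pi_d(\mathcal{L}^{c_0})$ agree. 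The genuine obstacle I anticipate throughout is making the power estimates uniform: the correction terms $C(\cdot)$, which reflect the conformal behaviour of the maps in play, are finite on each compact family occurring above but not over all of $\mathcal{L}^{c_1}$, so the argument must rely on the loops and contractions produced by Theorem \ref{prop:verypositiveloop} having uniformly controlled conformal behaviour — this is what finally pins down the threshold $c_1$ — and, on top of that, one must keep careful track of the basepoint as it migrates to $\Theta$ and back under each stabilization.
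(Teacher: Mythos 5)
Your overall strategy is the paper's: stabilize spheres and balls in $\mathcal L$ by composing with the powerful contractible loops $\{\varphi^N_t\}$ of Theorem \ref{prop:verypositiveloop}, use the composition formula \eqref{composition} together with the uniform conformal bounds $\lambda,\lambda'$ to see that the stabilized families land in the prescribed filtration level, and transport the basepoint along the path it traces. Your injectivity argument and the stabilization $G_c=G_{c_1}$ for $c\ge c_1$ are essentially the paper's proof.

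There is, however, one recurring claim that is false and that your surjectivity step leans on: that Theorem \ref{prop:verypositiveloop} supplies ``a contraction $\rho$ of $\Theta$ that stays in $\mathcal L^{c_0}$''. It does not. The theorem only bounds the power of the contraction from below by the \emph{fixed} constant $\epsilon<0$, so $\rho$ lies in $\mathcal L^{\epsilon}$ but in general not in $\mathcal L^{c_0}$ once $c_0>\epsilon$; for $c_0=0$ a contraction of a positive loop through positive loops is exactly the kind of object the example of \cite{EKP} shows one cannot count on, so no strengthening of Theorem \ref{prop:verypositiveloop} in this direction is available. Consequently ``changing basepoint along $\rho$'' may exit $\mathcal L^{c_0}$, and the class you produce need not live in $\pi_d(\mathcal L^{c_0})$. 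The correct substitute --- which the paper uses, and which your own estimate in the injectivity paragraph already contains --- is that the basepoint travels along $\rho(s)\,\sigma(*)$ rather than along $\rho$ itself, and this path stays in $\mathcal L^{c_0}$ provided the basepoint loop $\sigma(*)$ has power $c^*$ with $\epsilon+\min\{\lambda c^*,\lambda' c^*\}>c_0$. This is precisely why the statement insists on basepoints of \emph{sufficiently large} power $c^*$, a condition your convention (basepoint $\one$ for $c_0<0$, or any contractible loop of power merely exceeding $c_0$) does not guarantee. Relatedly, the additive superadditivity $pow(\gamma_1\gamma_2)\ge pow(\gamma_1)+pow(\gamma_2)-C(\gamma_1)$ is not the right shape: by \eqref{composition} the correction enters multiplicatively through the conformal factor, $pow(\gamma_1\gamma_2)\ge pow(\gamma_1)+\min\{\lambda\, pow(\gamma_2),\lambda'\, pow(\gamma_2)\}$, so an additive $C$ would have to depend on $\gamma_2$ as well. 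Since you do invoke the uniform bounds of Theorem \ref{prop:verypositiveloop}(iii), this only shifts the explicit thresholds, but the estimates must be rewritten in the multiplicative form for the choice of $c_1$ to come out uniform over all spheres in $\mathcal L^{c_1}$.
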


%\begin{theorem}\label{thm:intro}
%For any $c_0 \in \mathbb R$ there exists $c_1 \ge c_0$ such that, for all $c \ge c_1$, $i_{c_0,c}(\pi_d(\mathcal L^c))$ coincides with $i_{c_0,c_1}(\pi_d(\mathcal L^{c_1}))$ for any basepoint of sufficiently large power $c^*$.
%Furthermore, $i_{c_0}$ restricts to an isomorphism between $i_{c_0,c_1}(\pi_d(\mathcal L^{c_1}))$ and $\pi_d(\mathcal L)$. In particular, $i_{c_0}$ is surjective.
%\end{theorem}

We obtain the following corollary which shows that
the global topology of the space of loops $\mathcal L$ can be described in terms of its subspace of positive loops $\mathcal L^+= \mathcal L^0$.

\begin{corollary}\label{CorollaryPositiveIntro}Every homotopy group $\pi_d(\mathcal L)$ of $\mathcal L$ is isomorphic to a subgroup of $\pi_d(\mathcal L^+)$. More precisely:
\[\pi_d(\mathcal L)\cong
i_{0,c}(\pi_d(\mathcal L^{c}))<\pi_d\left(\mathcal L^+\right)\] 

for every sufficiently large $c>0$ and suitable choices of basepoints, where the isomorphism is induced by the natural inclusion $\mathcal{L}^+ \hookrightarrow \mathcal L$.
%The same statement holds for the space of loops of Legendrian submanifolds $\mathfrak L$ and its subspace of positive loops $\mathfrak L^+$. 

\end{corollary}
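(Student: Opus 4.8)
The plan is to obtain the corollary as a direct specialization of Theorem \ref{thm:intro} to the threshold $c_0=0$. By Definition \ref{Def:power} (and as recalled above), the filtration level $\mathcal L^0$ is precisely the subspace $\mathcal L^+$ of positive loops, and the homomorphism $i_0\colon\pi_d(\mathcal L^0)\to\pi_d(\mathcal L)$ is exactly the one induced by the inclusion $\mathcal L^+\hookrightarrow\mathcal L$. Fixing the homotopy degree $d$, let $c_1\ge 0$ be the constant furnished by Theorem \ref{thm:intro} for $c_0=0$. Its first bullet says that $i_0$ restricts to an isomorphism from the subgroup $G:=i_{0,c_1}\bigl(\pi_d(\mathcal L^{c_1})\bigr)\le\pi_d(\mathcal L^+)$ onto $\pi_d(\mathcal L)$; this already gives $\pi_d(\mathcal L)\cong G<\pi_d(\mathcal L^+)$ with the isomorphism induced by $\mathcal L^+\hookrightarrow\mathcal L$, which is the qualitative assertion.

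To upgrade this to the stated form for arbitrarily large $c$, I would invoke the second bullet of Theorem \ref{thm:intro}: for every $c\ge c_1$ and every basepoint of power exceeding the associated threshold $c^\ast$ (which may depend on $c$ and on $d$) one has $i_{0,c}\bigl(\pi_d(\mathcal L^{c})\bigr)=i_{0,c_1}\bigl(\pi_d(\mathcal L^{c_1})\bigr)=G$. Combined with the first bullet, $i_0$ then restricts to an isomorphism $i_{0,c}\bigl(\pi_d(\mathcal L^{c})\bigr)\xrightarrow{\,\cong\,}\pi_d(\mathcal L)$ for every sufficiently large $c>0$, and the functoriality of the inclusions $\mathcal L^{c}\hookrightarrow\mathcal L^+\hookrightarrow\mathcal L$, which gives $i_c=i_0\circ i_{0,c}$, identifies this isomorphism with the one induced by $\mathcal L^+\hookrightarrow\mathcal L$, exactly as in the statement.

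The only point requiring care, and the closest thing here to an obstacle, is the basepoint bookkeeping hidden in the words ``suitable choices of basepoints''. The constant loop has zero power, hence lies in none of the spaces $\mathcal L^{c}$ with $c\ge 0$, so the homotopy groups involved must be based at a genuine loop. I would therefore fix, for each $c\ge c_1$, a loop $\gamma$ of power strictly above $c^\ast$ (in particular of power $>c$); then $\gamma\in\mathcal L^{c}\subseteq\mathcal L^{c_1}\subseteq\mathcal L^+\subseteq\mathcal L$, so $i_{0,c}$, $i_{0,c_1}$, $i_c$ and $i_0$ are all defined relative to this single basepoint, the composition identity $i_c=i_0\circ i_{0,c}$ holds on the nose, and the equality $i_{0,c}(\pi_d(\mathcal L^{c}))=i_{0,c_1}(\pi_d(\mathcal L^{c_1}))$ supplied by Theorem \ref{thm:intro} is legitimate. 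Beyond this bookkeeping, the corollary contains no content not already in Theorem \ref{thm:intro}.
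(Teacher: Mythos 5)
Your proposal is correct and follows essentially the same route as the paper: Corollary \ref{CorollaryPositiveIntro} is obtained by specializing Theorem \ref{thm:intro} to $c_0=0$, using $\mathcal L^+=\mathcal L^0$, with the first bullet giving the isomorphism and the second bullet propagating it to all sufficiently large $c$; your basepoint bookkeeping matches the paper's convention of basing at loops of large power. The only detail the paper adds is the explicit threshold $c\ge -\epsilon/\lambda$ extracted from the proof of Theorem \ref{MainThm}, which your argument replaces by the abstract existence of $c_1$.
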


The same arguments can be adapted to the setting of loops of Legendrians submanifolds. For a fixed Legendrian submanifold $\Lambda_0 \subset M$, we consider the space $\mathfrak L$ of loops of Legendrians based at $\Lambda_0$ filtered by their power. Theorem \ref{thm:intro} and Corollary \ref{CorollaryPositiveIntro} hold if we replace loops of contactomorphisms by loops of Legendrians based at $\Lambda_0$. A more detailed description and adapted statements can be found in Section \ref{sec:Leg}.

\textbf{Acknowledgements.} The authors want to express their gratitude
to Francisco Presas, who suggested that a preliminary idea of making loops of Legendrians gain ``positivity'' works in a much more general setting. They also want to thank Álvaro del Pino for key suggestions about controlling the negativity of a homotopy.  Additionally, the second author wants to thank them both for their support and guidance during his PhD thesis.
The first author acknowledges support from PID2021-126124NB-I00 by MICINN (Spain). The second author aknowledges support from PID2022-142024NB-I00 by MICINN (Spain).

\section{Spaces of loops of Contactomorphisms}\label{Sec1}

\subsection{Preliminaries} Throughout this note, we work with a closed cooriented contact manifold $(M, \alpha)$. Let $\{\varphi_t: t \in \NS^1\}$ be a loop of contactomorphisms of $M$. 
All the loops of contactomorphisms considered in this article are based at the identity map, $\varphi_0 = \varphi_1 = \mathrm{id}$, where we think of $\NS^1$ as $[0,1]$ with glued endpoints. Unless otherwise stated, loops are differentiable in the time parameter $t$. The Hamiltonian associated to $\{\varphi_t\}$ is

\[
H_t(x) = H(x,t) = \alpha\left( \frac{d}{dt}\varphi_t(x) \right).
\]

A loop is called \textbf{positive} (resp. non--negative) if the Hamiltonian is positive (resp. greater than or equal to 0). Note that this property is independent of the parametrization of the loop: if $s = s(t)$ is a reparametrization of $[0,1]$ then the Hamiltonian $G_s(x)$ associated to the loop $\{\varphi_s = \varphi_{s(t)}\}$
satisfies

\begin{equation}\label{HamiltonianReparametrization}
G_s(x) = s'(t)\cdot H_t(x).
\end{equation}

\begin{definition}\label{Def:power}
The minimum value attained by the Hamiltonian, or by a family of Hamiltonians, will be called \textbf{power} (of the loop or family of loops; in most instances a homotopy of loops). 
\end{definition}

\begin{remark}
   Evidently, positive loops are the loops with positive power. Equation \eqref{HamiltonianReparametrization} shows the dependence of the power under time reparametrizations.
\end{remark}

\begin{remark}\label{rmk:powerpositive}
    Note that a reparametrization of a positive loop does not change the value of 
    
    \[\int_0^1 \min_x H_t(x)\,dt\]
    
    As a consequence, a positive loop $\{\varphi_t\}$ with Hamiltonian $H_t(x)$ can be reparametrized to a loop of power $\int_0^1 \min_x H_t(x) dt$.
\end{remark}

Given two loops of contactomorphisms $\{\varphi_t\}$ and $\{\phi_t\}$ with Hamiltonians $F_t$ and $G_t$, respectively, the Hamiltonian associated to the loop $\{\varphi_t \circ \phi_t\}$
is 

\begin{equation}\label{composition}
H_t(x) = F_t(x) + \lambda_{\varphi_t}(x) \cdot  G_t(\varphi_t^{-1}(x))
\end{equation}

where $\lambda_{\varphi_t} > 0$ denotes the conformal factor of $\varphi_t$; i.e. $\varphi_t^*\alpha = \lambda_{\varphi_t} \alpha$.

A positive loop $\{\varphi_t\}$ of contactomorphisms can be used to transform an arbitrary loop $\{\psi_t\}$ into a positive one $\{\widetilde{\psi}_t\}$ as follows. Concatenate $\{\varphi_t\}$ with itself a large number of times to obtain a loop with high enough power.
Note that the conformal factor of any element in the concatenation is equal to the conformal factor of some element in $\{\varphi_t\}$. Then, compose this powerful loop with $\{\psi_t\}$ to obtain a loop $\{\widetilde{\psi}_t\}$ which, in view of \eqref{composition}, is positive. Evidently, in this fashion we can produce loops of arbitrarily high power as well.

\begin{remark} It is important to point out that if $\{\varphi_t\}$ is contractible, we do not change the homotopy class of the loop in the previous construction. In other words, if there exists a contractible positive loop, every free homotopy class of loops contains elements of arbitrarily high power.
\end{remark}

Following \cite{EP}, let us state a characterization of non-orderability in the closed case in terms of the power.

\begin{theorem}\label{thm:folclore}
Let $(M, \alpha)$ be a closed contact manifold. The following statements are equivalent:
\begin{itemize}
    \item[(i)] $(M, \alpha)$ is non-orderable.
    \item[(ii)] Every loop of contactomorphisms in $(M, \alpha)$ is freely homotopic to a positive loop.
    \item[(iii)] For every $c\in\mathbb R$, every loop of contactomorphisms in $(M, \alpha)$ is freely homotopic to a loop with power greater than $c$.

\end{itemize}
\end{theorem}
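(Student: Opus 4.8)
The plan is to prove Theorem \ref{thm:folclore} by establishing the cycle of implications (i) $\Rightarrow$ (iii) $\Rightarrow$ (ii) $\Rightarrow$ (i), since (iii) $\Rightarrow$ (ii) is immediate (take $c=0$) and (ii) $\Rightarrow$ (i) is essentially the content of the Eliashberg--Polterovich characterization in the closed case. The only implication requiring genuine work is (i) $\Rightarrow$ (iii), or equivalently (ii) $\Rightarrow$ (iii), which says that once a loop can be made positive, it can in fact be made to have arbitrarily large power.

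First I would recall the Eliashberg--Polterovich theorem \cite{EP}: a closed cooriented contact manifold is non-orderable if and only if it admits a contractible positive loop of contactomorphisms. This gives us, from (i), a contractible positive loop $\{\varphi_t\}$. Now given an arbitrary loop $\{\psi_t\}$, non-orderability applied again (or the argument preceding the theorem in the excerpt) shows $\{\psi_t\}$ is freely homotopic to a positive loop; but more directly, I want to run the construction described just before the theorem statement: concatenate the contractible positive loop $\{\varphi_t\}$ with itself $N$ times to produce a loop $\{\varphi_t^{(N)}\}$ whose power is at least $N$ times the (positive) power of $\{\varphi_t\}$ after suitable reparametrization (here I use Remark \ref{rmk:powerpositive} to arrange the power of $\{\varphi_t\}$ to be a genuine positive constant $\varepsilon>0$, and note that all conformal factors appearing in the concatenation are those of the original loop, hence uniformly bounded below by some $\lambda_{\min}>0$). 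Then compose $\{\psi_t\}$ with $\{\varphi_t^{(N)}\}$: by the composition formula \eqref{composition}, the Hamiltonian of the composite is $F_t(x) + \lambda_{\varphi^{(N)}_t}(x)\cdot G^{(N)}_t((\varphi^{(N)}_t)^{-1}(x))$ where $F_t$ is the Hamiltonian of $\{\psi_t\}$, which is bounded below by some constant $-K$ on the compact manifold $M\times\NS^1$. Hence the composite has power at least $-K + \lambda_{\min}\cdot N\varepsilon$, which exceeds any prescribed $c$ once $N$ is large enough.

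It remains to check that this composite loop is freely homotopic to $\{\psi_t\}$, so that (iii) genuinely speaks about the same free homotopy class. This is where contractibility of $\{\varphi_t\}$ is essential: a null-homotopy of $\{\varphi_t\}$ through loops (based at the identity) induces a null-homotopy of $\{\varphi_t^{(N)}\}$ by concatenating the homotopy with itself $N$ times, and composing $\{\psi_t\}$ with this null-homotopy yields a free homotopy from $\{\psi_t\circ\varphi_t^{(N)}\}$ back to $\{\psi_t\}$. So the composite represents the same free homotopy class and has power $> c$, proving (iii). Finally, for (ii) $\Rightarrow$ (i): if $(M,\alpha)$ were orderable, the relation $\phi\preceq\psi$ iff $\psi\phi^{-1}$ is reachable by a non-negative path would be a genuine partial order on $\widetilde{\mathrm{Cont}}_0(M)$, which by \cite{EP} precludes any contractible positive loop; but a loop freely homotopic to a positive loop and based at the identity can be based-homotoped to one starting at the identity, and running the positivity forward contradicts antisymmetry — I would cite \cite{EP} for this standard step rather than reprove it.

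The main obstacle, such as it is, is purely bookkeeping: making precise the reparametrization needed so that the $N$-fold concatenation has power comparably $N\varepsilon$ (rather than power that degrades under concatenation), and checking that conformal factors stay uniformly bounded below — both of which are handled by Remark \ref{rmk:powerpositive} and the observation, stated in the excerpt, that concatenation only reuses the conformal factors of the original loop. No hard analysis or new geometric input is required beyond the cited Eliashberg--Polterovich result; the theorem is essentially a repackaging of that result together with the elementary composition and reparametrization formulas \eqref{composition} and \eqref{HamiltonianReparametrization}.
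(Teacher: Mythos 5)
Your proof is correct and follows essentially the same route the paper takes (the paper states this as folklore from \cite{EP} and its proof is exactly the construction sketched in the paragraphs preceding the theorem: compose with a high-power reparametrized $N$-fold concatenation of the contractible positive loop, which preserves the free homotopy class). One trivial slip: in \eqref{composition} the conformal factor that multiplies the second Hamiltonian is that of the \emph{first} factor of the composition, so for $\{\psi_t\circ\varphi_t^{(N)}\}$ the large term $G_t^{(N)}$ is weighted by $\lambda_{\psi_t}$ (bounded below by a positive constant since $\psi_t$ is a fixed loop on a compact manifold), not by $\lambda_{\varphi_t^{(N)}}$ — the estimate goes through unchanged.
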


\subsection{Construction of homotopies with bounded negativity}

Recall, as stated earlier, that we will work under the assumption that $(M, \alpha)$ is closed and non-orderable. By \cite{EP}, there exists a contractible positive loop of contactomorphisms $\{\varphi_t\}$. Denote by $\{\varphi_{t,s}: t \in \mathbb S^1, s \in [0,1]\}$ the homotopy connecting the constant loop $\{\varphi_{t,0}\}$ to the positive loop $\{\varphi_{t,1}\}=\lbrace \varphi_t\rbrace$, $\varphi_{t,0} = \varphi_{0,s} = \varphi_{1,s} = \mathrm{id}$. Denote by $F_{t,s}(x)$ the Hamiltonian associated to $\{\varphi_{t,s}\}$ and by $\epsilon < \min_{t,s} F_{t,s}$ a strict lower bound of the power of the homotopy. We choose $\epsilon < 0$. Write $0 < \lambda < \lambda'$ to denote fixed lower and upper bounds of the conformal factors of all the maps $\varphi_{t,s}$. In view of Remark \ref{rmk:powerpositive}, let us reparametrize in $t$ the homotopy $\{\varphi_{t,s}\}$ for values of $s$ close to 1 (and keep the same notation for simplicity) so that

\[
p_0 = pow(\{\varphi_t\}) = \min_{x,t} F_{t,1} = \int_0^1 \min_x F_{t,1}\,dt
\]

and the power of the homotopy remains bounded from below by $\epsilon$. Observe that $p_0 > 0$. We stick hereafter to the notations introduced in this paragraph.

\begin{figure}[h]
	\centering
	\includegraphics[width=1\textwidth]{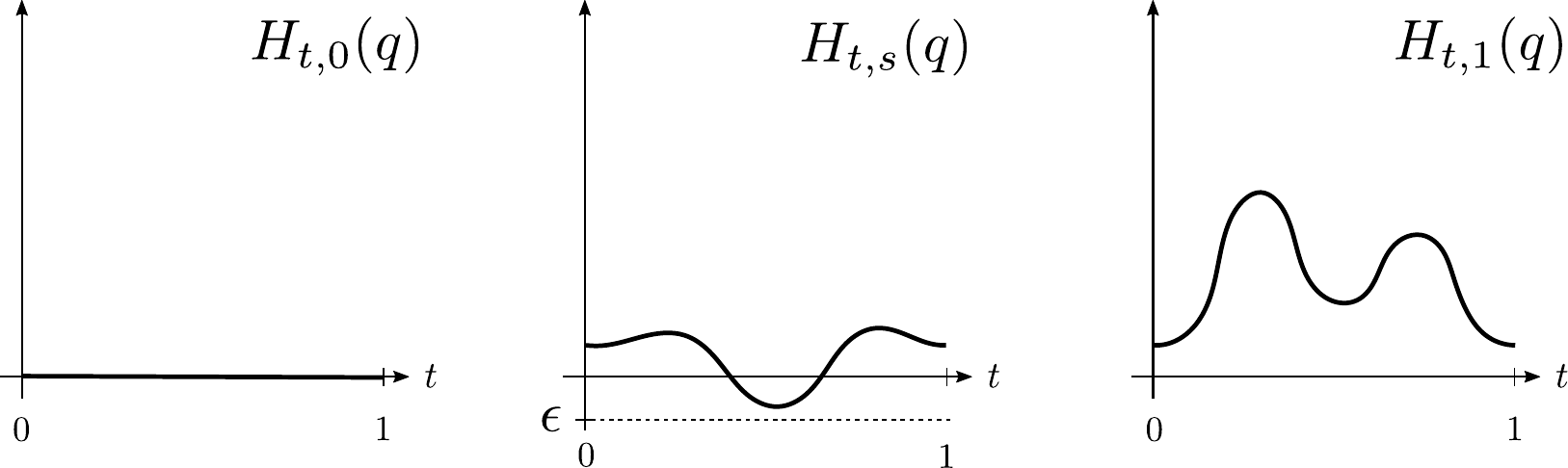}
	\caption{Schematic depiction of the family of Hamiltonians $H_{t,s}(q)$ over a point $q\in M$. Note that the (universal) lower bound $\epsilon$ is depicted in the second frame. This family connects the Hamiltonian associated to the constant identity loop (first frame) with the Hamiltonian of the positive contractible loop $\varphi_t$ (third frame).\label{insertion1}}
\end{figure}

%% HE MODIFICADO EL ENUNCIADO DEL TEOREMA SEGÚN HEMOS HABLADO POR WHATSAPP. HE DEJADO COMENTADO LO QUE TENÍAS PUESTO POR SI QUISIERAS RECUPERAR ALGO.
\begin{theorem}\label{prop:verypositiveloop}
%For every $n \in \mathbb N$, there exists a positive loop $\{\varphi^n_t\}$ and a homotopy of loops $\{\varphi^n_{t,s}\}$ from the constant loop $\{\varphi_{t,0}^n\}$ to $\{\varphi^n_t = \varphi^n_{t,1}\}$ such that:
For every $C > 0$, there exists $N \in \mathbb N$, a positive loop $\{\varphi^N_t\}$ and a homotopy of loops $\{\varphi^N_{t,s}\}$ from the constant loop $\{\varphi_{t,0}^N\}$ to $\{\varphi^N_t = \varphi^N_{t,1}\}$ such that:
\begin{itemize}
\item[(i)] %$poww\left(\{\varphi^N_t\}\right) > np_0$.
The power of the positive loop is greater than $C$; i.e. $pow\left(\{\varphi^N_t\}\right)\geq C$.
\item[(ii)] The power of the homotopy $\{\varphi^N_{t,s}\}$ is bounded from below by $\epsilon$.
\item[(iii)] $\lambda$ and $\lambda'$ are universal lower and upper bounds for the conformal factors of $\varphi^N_{t,s}$ for all $t,s$.
\end{itemize}
\end{theorem}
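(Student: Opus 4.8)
The plan is to let $\{\varphi^N_t\}$ be the $N$-fold concatenation of the contractible positive loop $\{\varphi_t\}$, and to produce the homotopy in (ii) by \emph{inserting} the $N$ copies one after another, in such a way that at every insertion the only negative Hamiltonian that appears is a fixed, $N$-independent rescaling of the original contraction $\{\varphi_{t,s}\}$. The reason the naive attempt (concatenating the homotopy $\{\varphi_{t,s}\}$ with itself $N$ times) fails is exactly the phenomenon the statement is designed to circumvent: compressing $\{\varphi_{t,s}\}$ into a time slot of length $1/N$ multiplies its Hamiltonian — and hence its negativity — by $N$, by \eqref{HamiltonianReparametrization}. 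One must therefore arrange that only the \emph{already positive} portion of each intermediate loop is ever compressed, never the portion that dips below zero.

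Set $N:=\lceil C/p_0\rceil$ and let $\{\varphi^N_t\}$ run through $N$ consecutive copies of $\{\varphi_t\}$, the $i$-th copy being traversed on $\big[\tfrac{i-1}{N},\tfrac iN\big]$ at speed $N$. By \eqref{HamiltonianReparametrization} the Hamiltonian of $\{\varphi^N_t\}$ on $\big[\tfrac{i-1}{N},\tfrac iN\big]$ is $N\,F_{N(t-(i-1)/N),\,1}$, whose minimum over $M$ is $N\min_x F_{t,1}(x)=Np_0$, since $\min_x F_{t,1}(x)=p_0$ for every $t$ (the loop $\{\varphi_t\}$ having been reparametrized as in the paragraph preceding the statement). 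Hence $pow(\{\varphi^N_t\})=Np_0\ge C$, which is (i); moreover every $\varphi^N_t$ equals some $\varphi_{t',1}$, so its conformal factor lies in $[\lambda,\lambda']$. Fix once and for all a speed $f_0>1$ with $f_0\cdot\min_{t,s}F_{t,s}>\epsilon$ — such $f_0$ exists because $\min_{t,s}F_{t,s}>\epsilon$ and $f_0\mapsto f_0\cdot\min_{t,s}F_{t,s}$ is continuous with value $>\epsilon$ at $f_0=1$. The point of $f_0$ is that the contraction $\{\varphi_{t,s}\}$ played on a time slot of length $1/f_0<1$ has Hamiltonian $f_0\,F_{t,s}$, still bounded below by $f_0\cdot\min_{t,s}F_{t,s}>\epsilon$.

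For $0\le j\le N$ let $L_j$ be the loop ``$j$ copies of $\{\varphi_t\}$ at speed $N$, filling $\big[0,\tfrac jN\big]$, followed by the identity on $\big[\tfrac jN,1\big]$'', so $L_0=\{\varphi^N_{t,0}\}$ is the constant loop and $L_N=\{\varphi^N_t\}$. The homotopy $\{\varphi^N_{t,s}\}$ is the concatenation of $N$ stages, the $j$-th stage deforming $L_{j-1}$ into $L_j$ through three moves. \textbf{(a)} Reparametrize the first $j-1$ copies so that they fill $\big[0,1-\tfrac1{f_0}\big]$ (their speed becoming $\tfrac{j-1}{1-1/f_0}$), the identity tail shrinking to the slot $\big[1-\tfrac1{f_0},1\big]$ of length $1/f_0$; for $j=1$ this move is vacuous. \textbf{(b)} On that slot, replace the identity by the contraction $\{\varphi_{t',s}\}$, $s\colon 0\to1$, played at the constant speed $f_0$. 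This is the only move with a negative Hamiltonian, and by the choice of $f_0$ it stays $>\epsilon$; at the end of the move we have $j$ genuine positive copies filling $[0,1]$. \textbf{(c)} Reparametrize these $j$ copies back to speed $N$, an identity tail reappearing on $\big[\tfrac jN,1\big]$; the result is $L_j$. In moves (a) and (c) every piece is either a reparametrized copy of the positive loop $\{\varphi_t\}$ — still positive, by \eqref{HamiltonianReparametrization} — or the identity, so the Hamiltonian is $\ge0$ there; and interpolating linearly in the reciprocals of the speeds keeps the total occupied length $\le1$ throughout, so the identity tails always have nonnegative length and vary continuously. Concatenating the $N$ stages yields a homotopy from $L_0=\{\varphi^N_{t,0}\}$ to $L_N=\{\varphi^N_t\}$ of power $>\epsilon$, which is (ii); and every contactomorphism occurring in it is one of the $\varphi_{t',s'}$, hence has conformal factor in $[\lambda,\lambda']$, which is (iii).

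The main obstacle is the bookkeeping in moves (a)–(b): one must guarantee that the copy being created in (b) is never sped up by more than the fixed factor $f_0$ — so that its negativity is controlled by $f_0\cdot\min_{t,s}F_{t,s}$ rather than by something proportional to $N$ — while the copies already in place, which \emph{are} compressed by a factor growing with $N$, contribute only positively, all within the unit interval at every instant of the homotopy. One last technical point: the concatenations above produce piecewise-smooth loops, but smoothing each piece by reparametrizing with a function vanishing to high order at its endpoints can be carried out while keeping the speed of the piece in move (b) at most $f_0$ (at the cost of enlarging its slot by an arbitrarily small amount), so it affects none of the conclusions.
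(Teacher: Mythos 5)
Your proposal is correct and follows essentially the same strategy as the paper's proof: build $\{\varphi^N_t\}$ as an $N$-fold concatenation of $\{\varphi_t\}$ and insert the copies one at a time by shrinking the already-positive part and playing the contraction $\{\varphi_{t,s}\}$ in a time slot whose length is bounded below independently of $N$ (your factor $f_0$ plays exactly the role of the paper's $1/(1-\delta)$, which it introduces only at the end to improve the bound $2\epsilon$ to $\epsilon$). The one point to watch is that your vanishing-derivative smoothing must not be applied to the final loop $L_N$ (it would zero the Hamiltonian at the junctions and destroy (i)); but since all $N$ copies in $L_N$ run at the same speed and $\{\varphi_t\}$ is a smooth loop on $\NS^1$, $L_N$ is already differentiable there, which is in substance the paper's own resolution of this technicality.
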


\begin{remark}
The notation $\varphi^N_t$ chosen for the positive loops in the statement is justified by the fact that $\varphi^N_t$ will be (as will become apparent in the proof below) a time reparametrization of the $N$--times concatenation of the loop $\{\varphi_t\}$. 
\end{remark}

\begin{proof}
Firstly, the proof constructs inductively homotopies from the constant loop to a (reparametrization of a) multiple concatenation of the positive loop of contactomorphisms $\{\varphi_t\}$ in a way that the power of the homotopies is bounded from below by $2\epsilon$. Later, we will have to modify (reparametrize) the homotopies to fulfill (i). Finally, a clever observation sharpens the power of the homotopy to $\epsilon$.

Let us describe the essence of the inductive step. Suppose we have a non--negative loop of contactomorphisms $\{\psi_t\}$ whose conformal factors are bounded by $\lambda$ and $\lambda'$ as in (iii).
It might be useful to keep in mind that later we will set $\psi_t = \varphi^n_t$. However, let us argue in general. Our goal is to deform $\{\psi_t\}$ into the concatenation of $\{\psi_t\}$ and $\{\varphi_t\}$. The task is divided into two steps.

We warn the reader that the we will have to momentarily consider loops that are not differentiable at some time values $t_i$. This issue will be addressed later in the proof.

\textbf{Step 1}. Consider the family of loops $\{\widetilde{\psi}_{u,s} = \psi_{t(u,s)}: 0 \le u \le 1\}$ where $t(u,s) = \min\{(1+s)u,1\}$ and $s \in [0,1]$. It defines a homotopy between $\{\psi_t\}_{t\in[0,1]}$, for $s = 0$, and $\{\widetilde{\psi}_u = \widetilde{\psi}_{u,1}\}_{u\in[0,1]}$. Note that $\widetilde{\psi}_u$ is equal to the identity for $u \in [1/2,1]$.

    Roughly speaking, this first deformation just ``shrinks'' the support (in time) of the initial loop to the first half on the time-interval (see Figure \ref{shrinking}).  Clearly, this homotopy consists of non-negative loops of contactomorphisms. Also note that it may be non differentiable at two time values (cf. the discontinuities of the Hamiltonian in Figure \ref{shrinking}). This issue will be addressed at the end of this stage. %\textcolor{purple}{Lo he modificado ligeramente, no me convencía la fórmula de los dos puntos en mitad de frase.}

\begin{figure}[h]
	\centering
	\includegraphics[width=0.76\textwidth]{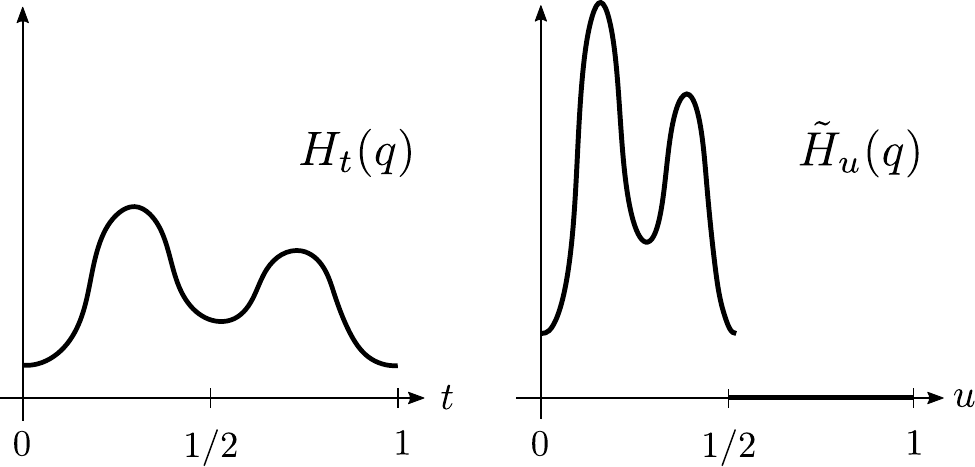}
	\caption{Schematic depiction of Step 1 in the proof: this first part of the homotopy shrinks the time-support of the Hamiltonian to the first half-interval. The left frame depicts the Hamiltonian $H_t$ of $\{\psi_t\}$ over a point $q\in M$ and the right one represents the Hamiltonian $\tilde{H}_u$ of $\{\tilde\psi_u=\psi_{t(u)}\}$ over $q\in M$. \label{shrinking}}
\end{figure}

\textbf{Step 2.}
We will now define the second part of the homotopy (which must then be concatenated to the first part defined in the previous step). 
%\textcolor{red}{For the ease of notation, this part of the homotopy $\left(\widehat{\psi}_{u,s}\right)_{s\in[0,1]}$ starting at $\widehat{\psi}_{u,0}:=\widetilde{\psi_u}$ will be parametrised by $s\in[0,1]$.}
Roughly speaking, it ``inserts'' the homotopy $\{\varphi_{t,s}\}$ in the interval $1/2 \le u \le 1$ (see Figure \ref{insertion2}). More precisely, define:

\[
%\textcolor{red}{\left(\widehat{\psi}_{u,s}\right)_{s\in[0,1]} = }
\widehat{\psi}_{u,s} = 
\begin{cases}
\widetilde{\psi}_u & \text{if } u \leq \frac{1}{2},\\
\varphi_{2u-1, s} & \text{if } u \geq \frac{1}{2}.
\end{cases}
\]

Note that $\widetilde{\psi}_u$ and $\varphi_{2u-1,0}$ are equal to the identity map if $u \ge 1/2$ so the homotopy $\{\widehat{\psi}_{u,s}: 0 \le s \le 1\}$ starts at the loop $\{\widetilde{\psi}_u\}$.
 
\begin{figure}[h]
	\centering
	\includegraphics[width=1\textwidth]{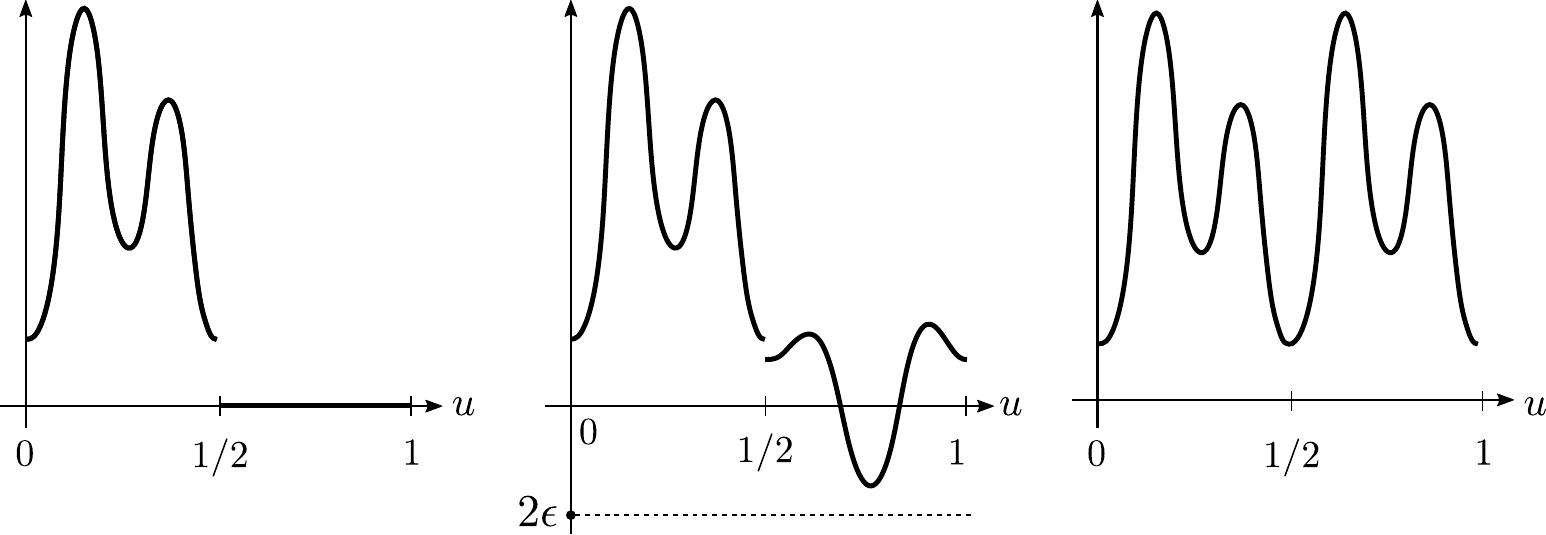}
	\caption{Schematical depiction of Step 2 in the proof for the particular case of $\psi_t=\varphi_t$: ``insertion'' of the homotopy $\{\varphi_{t,s}\}$ in the interval $1/2 \le u \le 1$. Note that the power is bounded from below by $2\epsilon$ due to the reparametrization in time (compare with Figure \ref{insertion1}).\label{insertion2}}
\end{figure}

The homotopy ends at a positive loop $\widehat{\psi}_{u}:=\widehat{\psi}_{u,1}$ that is the concatenation of $\{\psi_t\}$ and $\{\varphi_t\}$. 
In particular, if $H_t$ denotes the hamiltonian associated to $\{\psi_t\}$

\begin{equation}\label{eq:meanHamiltonian1}
\int_0^1 \min \widehat{H}_u \, du = \int_0^{1/2} \min 2 H_{2t}\,dt + \int_{1/2}^1 \min 2F_{2t-1,1}\,dt = \int_0^1 \min H_t\,dt + p_0
\end{equation}

By concatenation of Steps 1 and 2, we obtain a homotopy between $\{\psi_t\}$ and $\{\widehat{\psi}_u\}$ that consists of non--negative loops until we ``insert'' a copy of $\{\varphi_t\}$. The power of the first step is non--negative, while the power of the second step is twice the power of $\{\varphi_{t,s}\}$ due to the time reparametrization. Overall, $2\epsilon$ bounds from below the power of the homotopy (recall that $\epsilon < 0$).

Last, note that, by construction, every contactomorphism in any of the loops of the homotopy is either $\psi_t$ or $\varphi_{t,s}$ for some $t$ and $s$. Thus the conformal factors of the homotopy remain bounded between $\lambda$ and $\lambda'$. %\textcolor{red}{Step 2 is finished.}

Now, define homotopies in the following fashion. First, take the homotopy between the constant loop and $\{\varphi_t\}$. Then, starting with the positive loop $\{\varphi_t\}$ as $\{\psi_t$\}, apply inductively the procedure described in Steps 1 and 2 to obtain positive loops $\{\varphi^n_t\}$ for any $n \ge 1$. For each $n \ge 1$, we obtain a homotopy $\{\varphi_{t,s}^n\}$ between the constant loop and $\{\varphi_t^n\}$ that passes through $\{\varphi_t\}$ and $\{\varphi_t^k\}$ for all $k < n$ in this order.
As explained above, the homotopies have power bounded from below by $2\epsilon$ and the conformal factors lie between $\lambda$ and $\lambda'$. Let us analyze the Hamiltonians $F_{t}^n$ associated to $\{\varphi_t^n\}$ in order to reparametrize these positive loops to bound their power. In view of \eqref{eq:meanHamiltonian1}, by induction,

\begin{equation}\label{eq:meanHamiltonian2}
    \int_0^1 \min F_t^{n}\,dt = \int_0^1 \min F_t^{n-1}\,dt + p_0 = n\,p_0.
\end{equation}

\textbf{Discontinuities.} Let us address briefly a technical detail left above in Steps 1 and 2 that carries onto the construction of the homotopies: we may lose differentiability in up to two values of the time parameter $u$, say $u_0, u_1$ (more precisely, $u_0=0$, $u_1$ moves from $1$ to $1/2$ in the first step and $u_1 = 1/2$ for all loops in the second step). See Figure \ref{insertion2}. Note, however, that the one-sided limits of the Hamiltonians at $u_i$ always exist.

One way to recover differentiability is to slow down the loop in a neighborhood of each $u_i$ at the cost of making the Hamiltonian equal to zero. However, we can do better for the final loop $\{\widehat{\psi}_u\}$ since in the inductive step it is equal to $\{\varphi_t^n\}$, a concatenation of $n$ times the original positive loop $\{\varphi_t\}$ travelled at different speeds. A reparametrization that equalizes the parameter speed on both sides of $u_i$ is enough to regain differentiability while keeping positivity.

If the reparametrization is small and supported close to $u_i$ it has negligible effect on the power of the homotopies, that remain greater than $2\epsilon$. Neither the conformal factors nor expression \eqref{eq:meanHamiltonian2} are affected by a reparametrization at all (recall Remark \ref{rmk:powerpositive}).

\textbf{Power of the positive loop.} The final stage of the proof concerns the power. We have to modify further the positive loops and the homotopies so that property (i) in the statement is satisfied. The loops $\{\varphi_t^n\}$ are positive so, by Remark \ref{rmk:powerpositive}, can be reparametrized so that their power is equal to 

\begin{equation}\label{ExplicitBound}
\int_0^1 \min F_t^n\,dt = n\,p_0.
\end{equation}

Therefore, we can reparametrize in $t$ the homotopy $\{\varphi_{t,s}^n: 0 \le s \le 1\}$, only for values of $s$ sufficiently close to 1, so that the power of the final loop is equal to $n\,p_0$ while the power of the homotopy itself remains bounded by $2\epsilon$. For the sake of simplicity, we keep the same notations $\{\varphi_t^n\} / \{\varphi_{t,s}^n\}$ for the reparametrized loop/homotopy.

\textbf{Power of the homotopy.} It is clear from the proof that the factor 2 in the bound $2\epsilon$ is inverse to the length of the subinterval, $[1/2,1]$, where Step 2 takes place, that is, where the homotopy $\{\varphi_{t,s}\}$ is ``inserted'' after shrinking $\{\psi_t\}$. Should the subinterval be $[\delta, 1]$, the lower bound on the power of the homotopies $\{\varphi_{t,s}^n\}$ would be $\epsilon/(1-\delta)$ for all $n$. Therefore, if we choose $\epsilon', \delta$ so that 

\[\epsilon \le \epsilon'/(1-\delta) < \epsilon' < pow(\{\varphi_{t,s}\})\]

and start the proof from scratch with $\epsilon'$ instead of $\epsilon$ and take $\delta \in [0,1]$ as pivot point instead of $1/2$, we produce homotopies whose power is bounded from below by $\epsilon'/(1-\delta) \ge \epsilon$.\end{proof}

\begin{remark}
Note that, although we will not make further use of it, we explicitly computed the power of $\{\varphi^N_t\}$ within the proof: $pow(\{\varphi_t^N\}) = Np_0$.
\end{remark}

\subsection{Homotopy groups of the space of loops of contactomorphisms}

Denote by $F_{t,s}^N \colon M \to \mathbb R$ the Hamiltonians associated to the loops $\{\varphi_{t,s}^N\}$ of the homotopies constructed in Theorem \ref{MainThm}. Recall that, for every $C$ there exists $N\ge 1$ such that $F^N_{t,1} \ge C$ and $F^N_{t,s} > \epsilon$.

Suppose now that $\{\phi_t\}$ is a loop of contactomorphisms with associated Hamiltonian $G_t \colon M \to \mathbb R$. By \eqref{composition}, the Hamiltonian associated to $\{\varphi^N_{t,s} \circ \phi_t\}$ is: 

\begin{equation}\label{eq:composition}
F^N_{t,s} + \lambda^N_{t,s}\cdot G_t \circ (\varphi^N_{t,s})^{-1}
\end{equation}

where $\lambda^N_{t,s} \colon M \to \mathbb R^+$ denotes the conformal factor of $\varphi^N_{t,s}$, which takes values in $[\lambda, \lambda']$ by Theorem \ref{prop:verypositiveloop}.

By compactness of $M$, the values of $G_t$ are bounded and so are the conformal factors $\lambda^N_{t,s}$, so (\ref{eq:composition}) depends mainly on $F_{t,s}^N$ or, equivalently, on $N$ (and $C$). More precisely, if $p = pow(\{\phi_t\})$,

\begin{equation}\label{eq:cotapowerhomotopia}
pow(\{\varphi^N_{t,s} \circ \phi_t\}) \ge \epsilon + \min\{\lambda \cdot p, \lambda' \cdot p\}
\end{equation}

Let us emphasize that the lower bound is independent of $N$. For $s = 1$, $\epsilon$ can be replaced by $C$ in the formula and we conclude that $\{\varphi^N_{t} \circ \phi_t\}$ is a positive loop for $C > \max\{0, -\lambda'p\}$.

%\textcolor{red}{More precisely, if $p = pow(\{\phi_t\})$, the power of $\{\varphi^n_{t,s} \circ \phi_t\}$ is bounded from below by $\epsilon + \lambda \cdot p$ if $p \ge 0$ and by $\epsilon + \lambda'\cdot p$ if $p < 0$. Let us emphasize that the lower bound is independent of $n$. Furthermore, if $s = 1$, as $c$ grows the Hamiltonian \eqref{eq:composition} becomes arbitrarily large, bounded by $c+\min\{\lambda p, \lambda' p\}$, so $pow(\{\varphi^n_{t,1} \circ \phi_t\}) \to +\infty$ as $n \to +\infty$. Also, if $n > \max\{0, - \lambda' a/p_0\}$ the composite loop is a positive loop of contactomorphisms.}

The previous estimates are trivially valid for a continuous family $\{\{\phi_{t}^{\theta}\}: \theta \in \Theta\}$ of loops of contactomorphisms indexed by a compact set $\Theta$ provided $p\in\mathbb{R}$ is a uniform lower bound of their power. We apply them to the case $\Theta = \NS^d$ to work with homotopy groups of the space of loops of contactomorphisms. %If the minimum of the Hamiltonians associated to $\phi^k_t$ is $a \in \mathbb R$ then the Hamiltonians of the homotopy $\{\varphi^M_{t,s} \circ \phi^k_t\}$ are bounded from below by $2\epsilon + e^c\cdot a$

Denote by $\mathcal{L}$ the space of loops of contactomorphisms in $(M, \alpha)$ and by $\mathcal{L}^+$ the subspace of positive loops. For a fixed $c \in \mathbb R$, denote by $\mathcal L^c$ the subspace of loops of power greater than $c$;

\[
\mathcal{L}^c := \big\lbrace \{\phi_t\} \in \mathcal{L} : pow\left(\{\phi_t\}\right) > c \big\rbrace.
\]

Clearly, $\mathcal L$ can be seen as the direct limit of $\mathcal L^c$ as $c \to -\infty$. The purpose of the rest of the section is to relate the topology of $\mathcal L$ with the topology of the subspaces $\mathcal L^c$.

For $c > c'$, the natural inclusions $\mathcal L^c \hookrightarrow \mathcal L^{c'} \hookrightarrow \mathcal L$ define homomorphisms 

\[i_{c',c} \colon \pi_d(\mathcal L^c) \to \pi_d(\mathcal L^{c'})\quad \text{ and }\quad i_c \colon \pi_d(\mathcal L^c) \to \pi_d(\mathcal L).\]

Basepoints have been omitted from the notation. By arguments that are explained in some detail later, in the proof of Theorem \ref{MainThm}, basepoints are loops whose power is large enough. Remember that, by Theorem \ref{thm:folclore}, every path connected component of $\mathcal L$ contains loops of arbitrarily high power.
%As a consequence, the homomorphisms $i_c$ are surjective for every $c \in \mathbb R$. \textcolor{blue}{The reader can check the details in Theorem \ref{MainThm} below.}
Power estimates above imply that any sphere in $\mathcal L$ can be deformed to a sphere in $\mathcal L^c$ and the path traced by a basepoint remains in $\mathcal L^c$ provided its power is large. This is key to prove that the homomorphisms $i_c$ are surjective for every $c \in \mathbb R$ in Theorem \ref{MainThm}.

On the contrary, the example in \cite{EKP} of a contractible positive loop which cannot be deformed to the constant loop through positive loops shows that $i_c$ is not injective in general. In particular, one cannot expect the existence of weak homotopy equivalences between $\mathcal L$ and some $\mathcal L^c$, for instance the subspace of positive loops.
However, for every $d \ge 0$ and $c_0 \in \mathbb R$, $\pi_d(\mathcal L)$ is isomorphic to a relevant subgroup of $\pi_d(\mathcal L^{c_0})$, namely, the subgroup where the image of the maps $i_{c_0, c}$ stabilize as $c$ grows (see Theorem \ref{MainThm} below).

Before entering into the result and its proof let us state an elementary lemma in Algebraic Topology that allows to deform a free homotopy of (pointed) spheres into a pointed homotopy; that is, a homotopy that fixes the basepoint. This standard deformation is easily defined using the path traced by the basepoint. Note that the change of basepoint isomorphism for homotopy groups employs similar constructions.

%\textcolor{red}{Let us state a notational remark in advance:  we will regard pointed topological $d$-spheres $(S^d, p)$ in a topological space $X$ as maps $f:[-1,1]^d\to X$ where $f\left(\partial[-1,1]^d\right)=p.$}

\begin{lemma}\label{lem:basepoint}
Let $X$ be a topological space and $\lbrace f_s\rbrace_{s\in[0,1]}:\NS^d\to X$ a free homotopy of spheres. Then $\lbrace f_s\rbrace_{s\in[0,1]}$ is homotopic to a pointed homotopy $\lbrace g_s\rbrace_{s\in[0,1]}:(\NS^d,*)\to (X, q_0)$, where $q_0 = f_0(*)$, satisfying:

\begin{itemize}
    \item[i)]  $g_s(*) = q_0$ for all $s\in[0,1]$, 
    \item[ii)] $Im(g_s)=Im(f_s)\cup\{f_u(*): 0\leq u\leq s\}$ for all $s\in[0,1]$.
\end{itemize}
\end{lemma}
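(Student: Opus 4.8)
The plan is to reduce the problem to the standard change-of-basepoint construction for homotopy groups, made relative to the sphere coordinate. Concretely, given the free homotopy $\{f_s\}_{s\in[0,1]}$, consider the path $\gamma\colon[0,1]\to X$, $\gamma(s)=f_s(*)$, traced by the basepoint. The idea is to ``undo'' the motion of the basepoint progressively: for each $s$ we prepend (a suitably reparametrized, truncated version of) the path $\gamma|_{[0,s]}$ traversed backwards, so that $g_s$ is the concatenation of $f_s$ with this correction path, arranged on the sphere $\NS^d$ by collapsing the complement of a small ball near $*$. Since at the end of the correction path we return to $\gamma(0)=q_0$, we get $g_s(*)=q_0$ for all $s$, which is (i).

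First I would fix a model for $\NS^d$ as, say, $[0,1]^d/\partial$ with $*$ the image of $\partial$, and choose an embedded arc $\sigma\colon[0,1]\hookrightarrow\NS^d$ from $*$ ($\sigma(0)$) into the interior ($\sigma(1/2)$) and back to $*$ ($\sigma(1)$), together with a collapse map that sends a tubular neighborhood of $\sigma$ onto $[0,1]$ and everything else to the endpoints appropriately; this is the usual ``pinch'' used to define the action of $\pi_1$ on $\pi_d$ and to prove that free and based homotopy agree up to the basepoint-change automorphism. Then I would define $g_s$ to be $f_s$ on the part of $\NS^d$ away from the arc, and on the arc insert the reversed basepoint path $s'\mapsto \gamma(s-s')$ (reparametrized to $[0,1]$), with the understanding that for $s=0$ the arc contributes a constant and $g_0=f_0$. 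Continuity in $s$ is then straightforward, and the homotopy from $\{f_s\}$ to $\{g_s\}$ is obtained by shrinking the inserted arc to a point, i.e. by scaling the width of the tubular neighborhood from its full size down to zero.

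The one point requiring a little care — and what I expect to be the main (minor) obstacle — is property (ii): the image of $g_s$ must be exactly $\mathrm{Im}(f_s)\cup\{f_u(*):0\le u\le s\}$, with no spurious extra points. This forces the construction to be done so that, outside the arc, $g_s$ literally equals $f_s$ (so no reparametrization of $f_s$ on the sphere that would enlarge or alter its image), and on the arc $g_s$ takes values only along $\gamma|_{[0,s]}$, i.e. the inserted path must be the basepoint path itself (reversed), not some homotoped version of it. So I would be careful to define the pinch so that the ``collar'' on which $f_s$ is modified is collapsed precisely to $*$ by $f_s$ — but $f_s$ need not send a collar of $*$ to $q_0$, so instead I would route it through: on the arc, first run $f_s$ restricted to a short sub-arc reaching from $\sigma(1)=*$ slightly inward and back (contributing points of $\mathrm{Im}(f_s)$ only), then append $\gamma|_{[0,s]}$ reversed. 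Since both pieces lie in $\mathrm{Im}(f_s)\cup\mathrm{Im}(\gamma|_{[0,s]})$ and the rest of the sphere maps via $f_s$, equality of images in (ii) holds. The homotopy $\{f_s\}\simeq\{g_s\}$ respects (ii) throughout because at each stage we only add points already on $\gamma|_{[0,s]}$. The rest — continuity, $g_0=f_0$, and $g_s(*)=q_0$ — is routine once the pinch map is set up.
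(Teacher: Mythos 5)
Your underlying idea---attach the reversed basepoint path $\gamma|_{[0,s]}$ as a ``string'' to the balloon $f_s$---is exactly the construction the paper uses, and you correctly isolate (ii) as the point needing care. However, the assembly you commit to does not produce a continuous map. If $g_s$ is required to equal $f_s$ \emph{literally} on the complement of a tubular neighborhood $N$ of your arc, while on $N$ it factors through a collapse $N\to[0,1]$ followed by a path in $X$, then along $\partial N$ the two definitions must agree: the second is constant on each cross-section of the tube, whereas $f_s|_{\partial N}$ is not constant in general, so the pieces do not glue. (Relatedly, a ``collapse map'' sending $N$ onto $[0,1]$ and the complement of $N$ to the endpoints is itself discontinuous along the sides of the tube.) The constraint that forces you into this corner is a misconception: precomposing $f_s$ with a homeomorphism of the sphere, or of a subcube onto the whole cube, does \emph{not} enlarge or alter its image, so reparametrizing $f_s$ costs nothing for (ii). Your subsequent patch (running ``$f_s$ restricted to a short sub-arc'' before appending $\gamma$) inherits the same interface problem and is not a well-defined map on $N$.

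The paper's construction resolves precisely this gluing issue. Model $f_s$ on $[-1,1]^d$ with $\partial[-1,1]^d\mapsto q_s=f_s(*)$, and set $g_s(z)=f_s((1+s)z)$ on the inner cube $m=\max_i|z_i|\le 1/(1+s)$ and $g_s(z)=q_{(1+s)(1-m)}$ on the outer collar. The two pieces match on the interface $m=1/(1+s)$ because $f_s$ is constant (equal to $q_s$) on the boundary of its cube; the collar sweeps out exactly $\{f_u(*):0\le u\le s\}$; and the inner piece has image exactly $\Imagen(f_s)$ since $z\mapsto(1+s)z$ is a homeomorphism onto $[-1,1]^d$. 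So properties (i) and (ii) hold on the nose, and the free homotopy to $\{f_s\}$ is obtained by shrinking the collar, as you intended. To repair your write-up, replace the arc-and-tube gluing by this radial collar (or any variant in which the locus where $g_s$ deviates from a reparametrized $f_s$ is bounded by a level set on which $f_s$ is already constant).
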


\begin{remark} In order to have a picture in mind, let us say that we can regard $g_s$ as the ``balloon'' $f_s$ tied to $q_0$ with the ``string'' $\{f_u(*): 0 \le u \le s\}$.

%\textcolor{red}{Note that $i)$ just states that $g_s$ is a pointed homotopy; i.e. it fixes the basepoint for all $s\in[0,1]$, whereas $ii)$ states that each sphere $g_s$ in the homotopy can be regarded as the original sphere $f_s$ together with the path traced by the basepoint up to time $s$.} %Let us now enter into the proof.
\end{remark}

\begin{proof}
Suppose, in order to ease the forthcoming definition, that each $f_s$ is a map $[-1,1]^d \to X$ such that the boundary of $[-1,1]^d$ (identified to $*$) is mapped into a single point, say $q_s$.

    For every $s \in [0,1]$ define $g_s \colon [-1,1]^d \to X$ as
    
\begin{equation}
g_s(z) = \begin{cases}
f_s((1+s)z) & \text{if} \,\, m = \max\{|z_1|, \ldots, |z_d|\} \le 1/(1+s) \\
q_{(1+s)(1-m)} & \text{otherwise}\\
\end{cases}
\qquad \text{where}\, z = (z_1, \ldots, z_d).
\end{equation}

Since $g_s(\partial [-1,1]^d) = q_0$, we can see $\{g_s\}_{s \in [0,1]}$ as a homotopy of pointed maps $g_s \colon (\NS^d,*) \to (X,q_0)$ such that $g_0 = f_0$. Consequently, $\lbrace g_s\rbrace_{s\in[0,1]}$ is (freely) homotopic to $\lbrace f_s\rbrace_{s\in[0,1]}$. Property (ii) and the picture of $g_s$ described in the previous remark follow from the definition.\end{proof}

%\textcolor{red}{$\{g_s\}$ is a homotopy of maps such that $g_s(\partial [-1,1]^d) = p_0$. Therefore, if we see $\{f_s\}$ as a free homotopy of pointed spheres in $X$, $\{g_s\}$ is a homotopy of pointed spheres based at $p_0$ built from $\{f_s\}$ and the path $\{p_s\}$ traced by the basepoint.}

\begin{remark}\label{rmk:basepointBall}
The previous Lemma can be readily adapted to transform a free homotopy of closed balls in $X$ (continuous images of the unit $d$--dimensional disk) into a pointed homotopy of (pointed) closed balls. Indeed, given such a free homotopy, we can apply Lemma \ref{lem:basepoint} to the homotopy of the boundary sphere and just extend the homotopy of capping balls in the obvious manner.
\end{remark}

We now state and prove the main result of the paper, which is presented as Theorem \ref{thm:intro} in the Introduction.

\begin{theorem}\label{MainThm}
For any $c_0 \in \mathbb R$ there exists $c_1 \ge c_0$ such that the following two statements hold:

\begin{itemize}

  \item $i_{c_0}$ restricts to an isomorphism between $i_{c_0,c_1}(\pi_d(\mathcal L^{c_1}))$ and $\pi_d(\mathcal L)$. In particular, $i_{c_0}$ is surjective and $\pi_d(\mathcal L)$ is isomorphic to a subgroup of $\pi_d(\mathcal L^{c_0})$.

    \item Furthermore, $i_{c_0,c_1}(\pi_d(\mathcal L^{c_1}))$ can be represented by classes of arbitrarily high power. More precisely, for all $c \ge c_1$, $i_{c_0,c}(\pi_d(\mathcal L^c))$ coincides with $i_{c_0,c_1}(\pi_d(\mathcal L^{c_1}))$ for any choice of basepoint of sufficiently large power $c^*$.
    
\end{itemize}
\end{theorem}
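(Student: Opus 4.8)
The plan is to exploit the composition estimate \eqref{eq:cotapowerhomotopia} together with the very positive loops of Theorem \ref{prop:verypositiveloop} to push spheres down the filtration, and to use Lemma \ref{lem:basepoint} and Remark \ref{rmk:basepointBall} to keep basepoints under control. First I would fix $c_0$ and recall the constants $\epsilon, \lambda, \lambda'$ from the contractible positive loop $\{\varphi_t\}$. Given a class in $\pi_d(\mathcal L)$, represent it by a sphere of loops $\{\{\phi^\theta_t\}: \theta \in \NS^d\}$; by compactness of $\NS^d$ there is a uniform lower bound $p$ for the powers of all $\{\phi^\theta_t\}$, and likewise a uniform upper bound on their conformal factors. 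Composing the whole sphere with a very positive loop $\{\varphi^N_t\}$ of power $\geq C$ (for $C$ large) produces, by \eqref{eq:composition} and the estimate below it, a sphere whose loops all have power $\geq C + \min\{\lambda p, \lambda' p\}$; choosing $C$ large makes this exceed any prescribed $c_1 \geq c_0$. Moreover composing instead with the homotopy $\{\varphi^N_{t,s}\}$ gives a free homotopy (as $s$ runs from $0$ to $1$) between the original sphere and the composed one, and by \eqref{eq:cotapowerhomotopia} every intermediate sphere has power $\geq \epsilon + \min\{\lambda p, \lambda' p\} =: c_0'$. So, enlarging $c_0$ at the outset if necessary (or rather: it is the inclusion $\mathcal L^{c_0}\hookrightarrow\mathcal L$ we study, and we may assume $c_0 \le c_0'$ — this is where one has to be a little careful about which constant is chosen first), this homotopy lives in $\mathcal L^{c_0}$. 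Applying Lemma \ref{lem:basepoint} to this free homotopy of spheres turns it into a pointed homotopy whose basepoint string consists of loops $\{\varphi^N_{t,u}\circ\phi^*_t\}$, all of which also have power $\geq c_0'$; hence the original class and the composed, high-power class have the same image under $i_{c_0}$. This proves surjectivity of $i_{c_0}$ and shows $i_{c_0}$ maps $i_{c_0,c_1}(\pi_d(\mathcal L^{c_1}))$ onto $\pi_d(\mathcal L)$, once $c_1$ is chosen (depending only on $c_0$, $\lambda$, $\lambda'$, $\epsilon$, and the ambient bounds — note the bound $c_0'$ is independent of $N$, which is the crucial point) so that every sphere of power $> c_0$ can be pushed into $\mathcal L^{c_1}$ within $\mathcal L^{c_0}$.

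Next I would prove injectivity of $i_{c_0}$ restricted to $i_{c_0,c_1}(\pi_d(\mathcal L^{c_1}))$. Suppose $\{\phi^\theta_t\}$ and $\{\psi^\theta_t\}$ are spheres in $\mathcal L^{c_1}$ that are homotopic in $\mathcal L$; I want their images in $\pi_d(\mathcal L^{c_0})$ to agree. Take a free homotopy $H$ in $\mathcal L$ between them; it is parametrized by $\NS^d\times[0,1]$, a compact set, so all loops appearing in $H$ have power $\geq p$ for some $p \in \R$ (possibly very negative), and conformal factors in a fixed compact interval. Now compose the entire homotopy $H$ with a fixed very positive loop $\{\varphi^N_t\}$, $N$ large: by \eqref{eq:composition} and the estimate after it, every loop of $\{\varphi^N_t\}\circ H$ has power $\geq C + \min\{\lambda p,\lambda' p\} > c_0$ provided $C$ is large enough — and again $C$ depends only on $p$, $\lambda$, $\lambda'$, $c_0$, not on the particular spheres. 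So $\{\varphi^N_t\}\circ\{\phi^\theta_t\}$ and $\{\varphi^N_t\}\circ\{\psi^\theta_t\}$ are homotopic within $\mathcal L^{c_0}$. It remains to connect $\{\phi^\theta_t\}$ to $\{\varphi^N_t\}\circ\{\phi^\theta_t\}$ inside $\mathcal L^{c_0}$ (and similarly for $\psi$): use the homotopy $\{\varphi^N_{t,s}\}\circ\{\phi^\theta_t\}$ as above, whose loops have power $\geq \epsilon + \min\{\lambda\cdot c_1, \lambda'\cdot c_1\}$ because $\{\phi^\theta_t\}$ has power $> c_1$; choosing $c_1$ (enlarging it if needed, still depending only on the fixed data) so that $\epsilon + \min\{\lambda c_1, \lambda' c_1\} \geq c_0$, this chain of homotopies stays in $\mathcal L^{c_0}$, and passing through Lemma \ref{lem:basepoint} to fix basepoints as before, we conclude $i_{c_0,c_1}([\{\phi^\theta_t\}]) = i_{c_0,c_1}([\{\psi^\theta_t\}])$. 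This gives injectivity.

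For the second bullet, the ``stabilization'' statement, the point is that all the constructions above are uniform in $N$. Given $c \geq c_1$ I must show $i_{c_0,c}(\pi_d(\mathcal L^c)) = i_{c_0,c_1}(\pi_d(\mathcal L^{c_1}))$, where on the left the basepoint is any loop of sufficiently large power $c^*$. One inclusion is trivial since $\mathcal L^c \subset \mathcal L^{c_1}$ (so $i_{c_0,c} = i_{c_0,c_1}\circ i_{c_1,c}$, up to change of basepoint). For the reverse: a class in $i_{c_0,c_1}(\pi_d(\mathcal L^{c_1}))$ is represented by a sphere $\{\phi^\theta_t\}$ of power $> c_1$; composing with a very positive $\{\varphi^N_t\}$ of power $\geq C$ with $C$ large enough (depending on $c$, not on $N$) lands the composed sphere in $\mathcal L^c$ by \eqref{eq:cotapowerhomotopia}, and the homotopy $\{\varphi^N_{t,s}\}\circ\{\phi^\theta_t\}$ stays in $\mathcal L^{c_0}$ by the same estimate with $\epsilon$, so after the basepoint adjustment the class lies in the image of $i_{c_0,c}$. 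One also checks the basepoint issue: composing with $\{\varphi^N_t\}$ changes the basepoint of the sphere to a loop of large power, which is exactly the ``basepoint of sufficiently large power $c^*$'' in the statement, and the change-of-basepoint isomorphism is compatible with all the inclusions as long as the connecting paths stay in $\mathcal L^{c_0}$ — which they do, by the string construction of Lemma \ref{lem:basepoint}.

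The main obstacle, and the reason Theorem \ref{prop:verypositiveloop} is needed in the sharp form stated, is the \emph{uniformity in $N$} of the power bounds: it is only because the conformal factors of $\varphi^N_{t,s}$ stay in the fixed interval $[\lambda,\lambda']$ and the homotopy power stays $\geq \epsilon$ \emph{regardless of how large $N$ is} that the threshold $c_0'$ governing whether the sphere-homotopies stay in $\mathcal L^{c_0}$ can be chosen once and for all, independently of how positive a loop we need to compose with. If this uniformity failed, increasing $N$ to gain positivity could simultaneously degrade the lower bound on the homotopies, and the argument would collapse. The secondary technical nuisance is bookkeeping the basepoints: free homotopies of spheres must be upgraded to pointed ones via Lemma \ref{lem:basepoint} while verifying that the basepoint ``string'' itself never leaves $\mathcal L^{c_0}$; this is where property (ii) of that lemma, identifying the image of the pointed homotopy, does the work.
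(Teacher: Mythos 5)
Your proposal follows the paper's proof essentially step for step: compose spheres, balls and homotopies with the loops $\{\varphi^N_{t,s}\}$ of Theorem \ref{prop:verypositiveloop}, use the $N$--independent estimate \eqref{eq:cotapowerhomotopia} to choose $c_1$ with $\epsilon+\lambda c_1>c_0$ and $\epsilon+\lambda' c_1 > c_0$, and repair basepoints via Lemma \ref{lem:basepoint} and Remark \ref{rmk:basepointBall}; your closing discussion of uniformity in $N$ and of the basepoint string is exactly the point the paper emphasizes.

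One step is mis-stated, though it does not sink the argument. In the surjectivity part you assert that the free homotopy $\{\varphi^N_{t,s}\circ\phi^\theta_t\}$ from an \emph{arbitrary} sphere in $\mathcal L$ to its high-power composition stays in $\mathcal L^{c_0}$, ``enlarging $c_0$ at the outset if necessary'' so that $c_0\le c_0'=\epsilon+\min\{\lambda p,\lambda' p\}$. This move is not available: $c_0$ is given in the statement, and $p$ is the power of an arbitrary representative of an arbitrary class of $\pi_d(\mathcal L)$, so $c_0'$ can be arbitrarily negative. The observation that saves the day --- and the one the paper uses --- is that for surjectivity the connecting homotopy need only live in $\mathcal L$; the only object that must lie in $\mathcal L^{c_0}$ is the resulting balloon-on-a-string sphere, namely the composed sphere (power at least $C+\min\{\lambda p,\lambda' p\}$, arranged by taking $C$, hence $N$, large) together with the basepoint string $\{\varphi^N_{t,u}\circ\phi^*_t\}$, whose power is at least $\epsilon+\min\{\lambda c^*,\lambda' c^*\}$ and is therefore governed by the power $c^*$ of the basepoint alone, not by $p$. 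Choosing the basepoint in $\mathcal L^{c_1}$, which Theorem \ref{thm:folclore} permits, settles this. A similar but harmless slip occurs in your injectivity step: the constant $C$ there does depend on the particular spheres, through the lower bound $p$ of the chosen capping homotopy $H$; this is fine because injectivity only requires some $N$ for each such $H$. With these corrections your argument coincides with the paper's.
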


\begin{figure}[h]
	\centering
	\includegraphics[width=0.6\textwidth]{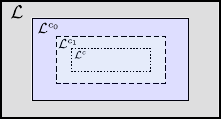}
	\caption{Flag of inclusions $\mathcal L\supset \mathcal L^{c_0}\supset \mathcal L^{c_1}\supset \mathcal L^{c}$. By Theorem \ref{MainThm}, for fixed $c_0\in\mathbb{R}$ there exists $c_1\geq c_0$ so that all subgroups $i_{c,c_0}(\pi_d(\mathcal L^c))$ are equal for $c\geq c_1$. In other words, there is a stable range for $c\in\mathbb{R}$ (stripped light-blue region) where all the subspaces $\mathcal L^c$ become weakly homotopy equivalent within $\mathcal L^{c_0}$.}
\end{figure}

\begin{proof} Let us start with the second item. We need to check that, for large enough $c_1$ and $c\geq c_1$, every homotopy class in $i_{c_0,c_1}(\pi_d(\mathcal L^{c_1}))$ can be realised by a homotopy class in  $i_{c_0,c}(\pi_d(\mathcal L^c))$. Equivalently, we need to show that every sphere $\{\{\phi_{t}^\theta\}: \theta \in S^d\}$ in $\mathcal L^{c_1}$ can be homotoped, within $\mathcal L^{c_0}$, to a sphere in $\mathcal L^c$. The choice of basepoint $\{\phi_t^*\}$ for the homotopy groups involved will be any loop of large enough power $c^*$, to be specified later.

In view of \eqref{eq:cotapowerhomotopia}, it suffices to guarantee that $\epsilon + \lambda c_1$ and $\epsilon + \lambda' c_1$ are greater than $c_0$. Indeed, assume that $c_1\geq c_0$ satisfies those estimates and take $c\geq c_1$. By the discussion below \eqref{eq:composition}, we can thus homotope $\{\phi_{t}^\theta\}_{\theta \in \NS^d}$ to $\{\varphi^N_t\circ\phi_{t}^\theta\}_{\theta \in \NS^d}$ within $\mathcal L^{c_0}$ for any $N \ge 1$. For a large enough choice of $N$ (which only depends on $c_1$ and $c$; and thus
valid for any sphere $\{\phi_t^{\theta}\}_{\theta \in \NS^d}$ in $\mathcal L^{c_1}$) we have that $pow(\{\varphi^N_t\circ\phi_{t}^{\theta}\}) > c$ for all $\theta \in \NS^d$, that is, the sphere of loops $\{\varphi^N_t\circ\phi_{t}^{\theta}\}$ lies entirely within $\mathcal L^c$.

The deformation we just described is a free homotopy that moves the basepoint through loops with power bounded from below by $\min\{\epsilon+\lambda c^*, \epsilon+\lambda'c^*\}$ and, thus, if $c^*$ is large enough, the path travelled by such basepoint lies entirely in $\mathcal L^c$. By Lemma \ref{lem:basepoint}, there is a pointed homotopy between $\{\phi_t^{\theta}\}_{\theta \in \NS^d}$ and a sphere in $\mathcal L^c$ based at $\{\phi_t^*\}$ within $\mathcal L^{c_0}$. This yields the claim.

The first item can be proved along the same lines. To check injectivity suppose that $\{\phi_t^{\theta}\}_{\theta \in \NS^d}$ is a sphere that lies in $\mathcal L^{c_1}$ (for the same choice of $c_1$ as above) and is capped by a ball $\{\phi_t^{\overline{\theta}}\}_{\theta \in B^{d+1}}$ in $\mathcal L$. By compactness, if $N$ is large enough, $\{\varphi_{t}^N \circ \phi_t^{\overline{\theta}}\}$ is a ball of loops in $\mathcal L^{c_1}$ (freely) homotopic to $\{\phi_t^{\overline{\theta}}\}$, where the homotopy is given by $\{\varphi_{t,s}^N \circ \phi_t^{\overline{\theta}}\}_{s\in[0,1]}$. By the choice of $c_1$, the path travelled by the basepoint of the sphere during the homotopy is totally contained in $\mathcal L^{c_0}$ and, thus, can be used to define a pointed homotopy of closed balls between $\{\phi_t^{\bar{\theta}}\}_{\theta \in B^{d+1}}$ and a closed ball lying entirely within $\mathcal L^{c_0}$ as mentioned in Remark \ref{rmk:basepointBall}. This yields injectivity. Note that there are no restrictions to the choice of basepoint in this argument other than that it belongs to $\mathcal L^{c_1}$. 

As for surjectivity, note that the argument in the previous paragraph almost does the job. If we start with a sphere $\{\phi_t^{\theta}\}_{\theta \in \NS^d}$ in $\mathcal L$ based at some $\{\phi_t^*\}\in\mathcal L^{c_1}$, we can just consider its deformation into the sphere $\{\varphi_{t}^N \circ \phi_t^{\theta}\}_{\theta \in \NS^d}$ (which for large enough $N$ lies in $\mathcal L^{c_0}$). Once again, we can use the path travelled by its basepoint (which lies in $\mathcal L^{c_0}$ since we took $\{\phi_t^*\}$ in $\mathcal L^{c_1}$) to make the deformation relative to basepoint. \end{proof}

Note that by taking $c_0=0$ in Theorem \ref{MainThm} we obtain the following corollary.

\begin{corollary}\label{CorollaryPositive}
Every homotopy group $\pi_d(\mathcal L)$ of the space of loops of contactomorphisms $\mathcal L$ is isomorphic to a subgroup of the homotopy group of the space of positive loops $\mathcal L^+$. More precisely: 

\[\pi_d(\mathcal L)\cong
i_{0,c}(\pi_d(\mathcal L^{c}))<\pi_d\left(\mathcal L^+\right)\]

for every sufficiently large $c>0$. Moreover, the isomorphism is induced by the inclusion $\mathcal{L}^+ \hookrightarrow \mathcal L$.
\end{corollary}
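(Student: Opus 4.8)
The plan is to specialize Theorem \ref{MainThm} to the case $c_0 = 0$ and to identify the subspace $\mathcal{L}^{0}$ with the subspace of positive loops $\mathcal{L}^{+}$.

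First I would record the identification $\mathcal{L}^{+} = \mathcal{L}^{0}$: by definition a loop is positive precisely when its Hamiltonian is everywhere positive, that is, when its power (the minimum of the associated Hamiltonian, Definition \ref{Def:power}) is strictly greater than $0$, which is exactly the defining condition of $\mathcal{L}^{0}$. Consequently the inclusion $\mathcal{L}^{+}\hookrightarrow\mathcal{L}$ is literally the inclusion $\mathcal{L}^{0}\hookrightarrow\mathcal{L}$, and the induced homomorphism on homotopy groups is $i_0\colon \pi_d(\mathcal{L}^{0})\to\pi_d(\mathcal{L})$.

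Next I would apply the first item of Theorem \ref{MainThm} with $c_0=0$: there exists $c_1\geq 0$ such that $i_0$ restricts to an isomorphism from $i_{0,c_1}(\pi_d(\mathcal{L}^{c_1}))$ onto $\pi_d(\mathcal{L})$. Since $i_{0,c_1}(\pi_d(\mathcal{L}^{c_1}))$ is by construction a subgroup of $\pi_d(\mathcal{L}^{0})=\pi_d(\mathcal{L}^{+})$, and the isomorphism is the restriction of $i_0$, this already delivers $\pi_d(\mathcal{L})\cong i_{0,c_1}(\pi_d(\mathcal{L}^{c_1}))<\pi_d(\mathcal{L}^{+})$ with the isomorphism induced by the inclusion. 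For the sharper ``for every sufficiently large $c$'' assertion I would invoke the second item of Theorem \ref{MainThm}: for all $c\geq c_1$ and any basepoint of sufficiently large power, $i_{0,c}(\pi_d(\mathcal{L}^{c}))=i_{0,c_1}(\pi_d(\mathcal{L}^{c_1}))$; composing with the first item shows that $i_0$ restricts to an isomorphism between $i_{0,c}(\pi_d(\mathcal{L}^{c}))$ and $\pi_d(\mathcal{L})$ for every $c\geq c_1$, which is the stated conclusion.

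I do not anticipate any genuine obstacle: the mathematical content is entirely contained in Theorem \ref{MainThm}, and the corollary is a formal specialization. The only steps requiring (minimal) attention are the identification $\mathcal{L}^{+}=\mathcal{L}^{0}$ and keeping track of the fact that the asserted isomorphism $\pi_d(\mathcal{L})\cong i_{0,c}(\pi_d(\mathcal{L}^{c}))$ is the one induced by $\mathcal{L}^{+}\hookrightarrow\mathcal{L}$, which holds precisely because that isomorphism is by definition the restriction of $i_0$.
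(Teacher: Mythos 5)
Your proposal is correct and follows essentially the same route as the paper, which simply specializes Theorem \ref{MainThm} to $c_0=0$ and uses the identification $\mathcal L^+=\mathcal L^0$; the paper's only addition is to extract from the proof of the theorem the explicit sufficient bound $c \ge -\epsilon/\lambda$.
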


\begin{proof}
    In view of the proof of Theorem \ref{MainThm}, it is enough to take $c \ge -\epsilon/\lambda$.
\end{proof}

\section{Applications to spaces of loops of Legendrian submanifolds}\label{sec:Leg}

All the arguments above readily translate to the context of loops of Legendrian submanifolds in a rather straightforward fashion. Let us, nonetheless, briefly elaborate on this for the sake of completeness.

Let $\{\Lambda_t\}$, $t \in \NS^1$, be a loop of Legendrians based at a given Legendrian submanifold $\Lambda_0\subset M$. Consider a family of embeddings $j_t:\Lambda\to M$, $j_t(\Lambda) = \Lambda_t$, $0 \le t \le 1$, generating the loop of Legendrians. The associated \textbf{contact Hamiltonian} $h_t$ is given by 

\[h_t(x):=\alpha_{j_t(x)}\left(\frac{dj_t}{dt}(x)\right).\] 

%\begin{remark}Any other reparametrisation $\tilde{i}_t=i_t\circ \tau_t(\cdot)$ of the isotopy yields the contact Hamiltonian $\tilde{h}^k_{t}(\cdot)=h^k_t\circ\tau_t(\cdot)$. Therefore, we can unambigously talk about contact Hamiltonians $h_t^k:\Lambda\to\mathbb{R}$ associated to isotopies of Legendrian submanifolds $\Lambda_t^k, k\in K, t\in[0,1]$ based at $\Lambda_0$.
%\end{remark}

The definition of power naturally adapts for loops of Legendrians.

\begin{definition}
We will call \textbf{power of a loop of Legendrians} (or of a family of loops) to the minimum value $\min_{t,x} h_t(x)$ attained by its hamiltonian $h_t$ (or family of Hamiltonians).
\end{definition}

By the Legendrian isotopy extension theorem, any loop of Legendrians $\{\Lambda_t\}$ can be extended to a contact isotopy $\{\phi_t\}$ starting at the identity. Note, however, that $\{\phi_t\}$ may not be a loop of contactomorphisms because $\phi_1$ may differ from the identity.

A close inspection of the proof of the Legendrian isotopy extension theorem (for example in \cite[Thm. 2.6.2]{GeigesBook}) immediately yields that it can be used parametrically. Given a continuous family of loops of Legendrians $\{\Lambda_t^{\theta}\}_{\theta \in \Theta}$ we obtain a family of contact Hamiltonians $\{h_t^{\theta}\}_{\theta \in \Theta}$ continuous in $\theta$ that generates a family of contact isotopies $\{\phi_t^{\theta}\}_{\theta \in \Theta}$, continuous in $\theta$ as well, that extends $\{\Lambda_t^{\theta}\}_{\theta \in \Theta}$.

%By the Legendrian isotopy extension theorem, any compact family of Legendrian loops $\lbrace \Lambda^k_t\rbrace_{k\in K}$ can be extended to a compact family of global contact isotopies $\lbrace \phi^k_t\rbrace_{k\in K}$ (\textcolor{red}{Par de frases acerca de que funciona con parámetros}).  Note that there is, nonetheless, a subtlety to consider: the corresponding family of extensions may not be a family of loops of global contactomorphisms but only a family of global isotopies. But this is not a problem at all since, by construction, the restrictions to $\Lambda_0$ of those global isotopies do yield loops of Legendrian submanifolds. 

%\begin{remark} When talking about loops of Legendrians $\{\phi_t\}$, we will implicitly identify them with global isotopies of contactomorphisms generating them. In a slight abuse of notation, we will write $\{\phi_t\}$ for both.
%\end{remark}

%Suppose $\{j_t\}$ is a loop of Legendrians, $j_t \colon \Lambda \to M$, with associated Hamiltonian $G_t \colon \Lambda \to \mathbb R$. As before, we use Theorem \ref{prop:verypositiveloop} to make the power of the loop grow.
Note that the positive contractible loop of contactomorphisms $\{\varphi_t\}$ gives raise to positive contractible loops of Legendrians just by the natural action. If $\{j_t\}$ is a loop of Legendrians, $j_t \colon \Lambda \to M$, with associated Hamiltonian $G_t \colon \Lambda \to \mathbb R$, then the Hamiltonian associated to the composition $\{\varphi^n_{t,s} \circ j_t\}$ is:

\begin{equation}\label{eq:compositionLeg}
F^n_{t,s} + \lambda^n_{t,s} \cdot G_t \circ (\varphi^n_{t,s})^{-1}.
\end{equation}

Thus, the same estimates from Subsection \ref{Sec1} follow for loops and families of loops of Legendrians. In consequence, we recover, in the context of loops of Legendrian submanifolds, results similar to those stated in the previous subsection with analogous proofs.

Fix a Legendrian submanifold $\Lambda_0 \subset M$. Henceforth assume that all the loops of Legendrians are based at $\Lambda_0$.
Denote the space of loops of Legendrians by $\mathfrak{L}$ and by $\mathfrak{L}^+$ its subspace of positive loops. For a fixed $c \in \mathbb R$, denote by $\mathfrak{L}^c$ the subset of loops of power greater than $c$. For $c > c'$, we have the same natural inclusions as earlier:

\[\mathfrak{L}^c \hookrightarrow \mathfrak{L}^{c'} \hookrightarrow \mathfrak{L}\]

which induce homomorphisms at the level of homotopy groups:

\[\iota_{c',c} \colon \pi_d(\mathfrak{L}^c) \to \pi_d(\mathfrak{L}^{c'})\quad \text{ and }\iota_c \colon \pi_d(\mathfrak{L}^c) \to \pi_d(\mathfrak{L}).\]

The discussion on basepoints from the previous section is valid in this context. Basepoints are loops of Legendrians of sufficiently large power.
Clearly, the argument described before Theorem \ref{thm:folclore} which deforms a given loop to a loop above any prescribed power can be translated verbatim to the context of Legendrian submanifolds. %\textcolor{red}{We obtain that the homomorphisms $\iota_c \colon \pi_d(\mathfrak{L}^c) \to \pi_d(\mathfrak{L})$ are surjective. 
%Likewise, we recover Theorem \ref{MainThm} in its Legendrian version. Again, the same proof applies.}
Likewise, Theorem \ref{prop:verypositiveloop} is the key to maintain under control the power of the loops of the deformation and obtain sharper results. The analogue of Theorem \ref{MainThm} for Legendrian loops is now stated.

\begin{theorem}\label{MainThmLeg}
For every $c_0 \in \mathbb R$ there exists $c_1 \ge c_0$ such that, for all $c \ge c_1$, $\iota_{c_0,c} (\pi_d(\mathfrak{L}^c))$ coincides with $\iota_{c_0,c_1}(\pi_d(\mathfrak{L}^{c_1}))$ for any basepoint of sufficiently large power $c^*$. Furthermore, $\iota_{c_0}$ restricts to an isomorphism between $\iota_{c_0,c_1}(\pi_d(\mathfrak{L}^{c_1}))$ and $\pi_d(\mathfrak{L})$. In particular, $\iota_{c_0}$ is surjective.
\end{theorem}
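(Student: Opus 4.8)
The plan is to transfer the proof of Theorem \ref{MainThm} to the Legendrian setting essentially verbatim, once we have assembled the three ingredients that made the contactomorphism argument work: a quantitative ``powerful contractible loop'' lemma, composition estimates for the power of a concatenation, and the balloon-on-a-string basepoint deformation of Lemma \ref{lem:basepoint}. The first ingredient is supplied by Theorem \ref{prop:verypositiveloop} itself: the positive contractible loop of contactomorphisms $\{\varphi_t\}$ acts on Legendrians via $j_t \mapsto \varphi_t \circ j_t$, producing a positive contractible loop of Legendrians, and the whole family $\{\varphi^N_{t,s}\}$ acts similarly, giving a homotopy of loops of Legendrians from the constant loop $\{\Lambda_0\}$ to a positive loop of Legendrians whose power is $\ge C$, with homotopy power $\ge \epsilon$ and conformal factors in $[\lambda,\lambda']$. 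The second ingredient is the composition formula \eqref{eq:compositionLeg}: if $\{j_t^\theta\}_{\theta\in\Theta}$ is a compact family of loops of Legendrians with uniform power lower bound $p$, then the Hamiltonian of $\{\varphi^N_{t,s}\circ j_t^\theta\}$ equals $F^N_{t,s} + \lambda^N_{t,s}\cdot G_t^\theta\circ(\varphi^N_{t,s})^{-1}$, so by compactness of $M$ (the $G_t^\theta$ are uniformly bounded) and $\lambda^N_{t,s}\in[\lambda,\lambda']$ we obtain exactly the estimate
\[
\mathrm{pow}\bigl(\{\varphi^N_{t,s}\circ j_t^\theta\}\bigr) \ge \epsilon + \min\{\lambda p, \lambda' p\},
\]
uniformly in $N$ and $\theta$, with $\epsilon$ replaced by $C$ when $s=1$. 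This is the Legendrian analogue of \eqref{eq:cotapowerhomotopia}.

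With these in hand, the proof proceeds in the same order as that of Theorem \ref{MainThm}. First, choose $c_1 \ge c_0$ large enough that $\epsilon + \lambda c_1 > c_0$ and $\epsilon + \lambda' c_1 > c_0$. Then, for the ``stabilization'' statement: given $c \ge c_1$ and a sphere $\{\{\Lambda_t^\theta\}:\theta\in\NS^d\}$ in $\mathfrak L^{c_1}$, the homotopy $\{\varphi^N_{t,s}\circ j_t^\theta\}$ stays in $\mathfrak L^{c_0}$ (by the choice of $c_1$), and for $N$ large enough—depending only on $c_1$ and $c$, hence uniform over the sphere—the endpoint sphere $\{\varphi^N_t\circ j_t^\theta\}$ lies in $\mathfrak L^c$. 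This is a free homotopy, but the basepoint travels through loops of power $\ge \min\{\epsilon+\lambda c^*,\epsilon+\lambda' c^*\}$, which exceeds $c$ once $c^*$ is large; so Lemma \ref{lem:basepoint} upgrades it to a pointed homotopy within $\mathfrak L^{c_0}$, proving $\iota_{c_0,c}(\pi_d(\mathfrak L^c)) \supseteq \iota_{c_0,c_1}(\pi_d(\mathfrak L^{c_1}))$; the reverse inclusion is immediate from functoriality. For the first statement, injectivity of $\iota_{c_0}$ restricted to $\iota_{c_0,c_1}(\pi_d(\mathfrak L^{c_1}))$ follows by the same device: a sphere in $\mathfrak L^{c_1}$ that bounds a ball in $\mathfrak L$ can be composed with $\varphi^N$ (large $N$) to yield, by compactness of the ball, a capping ball in $\mathfrak L^{c_1}\subset\mathfrak L^{c_0}$, again made pointed using Remark \ref{rmk:basepointBall} since the basepoint path stays in $\mathfrak L^{c_0}$. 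Surjectivity of $\iota_{c_0}$ is the observation that an arbitrary sphere in $\mathfrak L$ based at a loop in $\mathfrak L^{c_1}$ deforms, via $\{\varphi^N_{t,s}\circ j_t^\theta\}$ with $N$ large, into $\mathfrak L^{c_0}$ keeping the basepoint path in $\mathfrak L^{c_0}$.

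The one genuinely new technical point—the only place where the Legendrian case is not literally the contactomorphism proof with symbols renamed—is that loops of Legendrians are not themselves flows, so the composition formula \eqref{eq:compositionLeg} must be justified via the (parametric) Legendrian isotopy extension theorem, which the excerpt already invokes: one extends the family $\{\Lambda_t^\theta\}$ to a continuous family of contact isotopies $\{\phi_t^\theta\}$, and then $\{\varphi^N_{t,s}\circ j_t^\theta\}$ is generated by the composed contact isotopy $\{\varphi^N_{t,s}\circ\phi_t^\theta\}$, whose Hamiltonian is computed by \eqref{composition}; restricting to $\Lambda$ gives \eqref{eq:compositionLeg}. I expect this bookkeeping—checking that extension can be done continuously in $\theta$ and that the resulting contact Hamiltonians vary continuously, so that the uniform bounds on $G_t^\theta$ over a compact parameter set are legitimate—to be the main (though routine) obstacle; everything downstream is a transcription of the proof of Theorem \ref{MainThm}. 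Finally, as in Corollary \ref{CorollaryPositive}, taking $c_0 = 0$ and $c \ge -\epsilon/\lambda$ recovers that $\pi_d(\mathfrak L)$ embeds as a subgroup of $\pi_d(\mathfrak L^+)$.
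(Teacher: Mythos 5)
Your proposal is correct and follows essentially the same route as the paper, which proves Theorem \ref{MainThmLeg} by transcribing the proof of Theorem \ref{MainThm} using the Legendrian composition formula \eqref{eq:compositionLeg}, the parametric Legendrian isotopy extension theorem, and the same basepoint conditions ($c \le \epsilon + \min\{\lambda c^*, \lambda' c^*\}$ for the stabilization statement, any basepoint in $\mathfrak L^{c_1}$ for the isomorphism statement). The one point you flag as the genuinely new technical ingredient—parametric continuity of the extension and of the resulting contact Hamiltonians—is exactly the point the paper addresses before stating the theorem.
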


The same arguments from Theorem \ref{MainThm} prove Theorem \ref{MainThmLeg}. In particular, basepoints in the first statement need to have power $c^*$ such that $c \le \epsilon + \min\{\lambda c^*, \lambda' c^*\}$ whereas any basepoint in $\mathfrak L^{c_1}$ works for the second statement.
Setting $c_0=0$, we get the following corollary, which is analogous to Corollary \ref{CorollaryPositive}.

\begin{corollary}\label{CorollaryPositive2}
Every homotopy group $\pi_d(\mathfrak{L})$ of the space of loops of Legendrians $\mathfrak L$ is isomorphic to a subgroup of the homotopy group of the space of positive loops of Legendrians $\mathfrak{L}^+$. More precisely: 

\[\pi_d(\mathfrak{L}) \cong
\iota_{0,c}(\pi_d(\mathfrak{L}^c))<\pi_d\left(\mathfrak{L}^+\right)\]

for every sufficiently large $c>0$. Such isomorphism is induced by the inclusion $\mathfrak{L}^+ \hookrightarrow\mathfrak{L}$.
\end{corollary}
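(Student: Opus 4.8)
The plan is to deduce the statement directly from Theorem~\ref{MainThmLeg} by specializing $c_0 = 0$, in exact parallel with the way Corollary~\ref{CorollaryPositive} is obtained from Theorem~\ref{MainThm}. The first thing I would record is the identification $\mathfrak{L}^+ = \mathfrak{L}^0$: a loop of Legendrians is positive precisely when the minimum of its contact Hamiltonian, i.e.\ its power, is positive. Hence the ``subspace of positive loops'' and the subspace $\mathfrak{L}^c$ for $c = 0$ denote the same space, and the inclusion $\mathfrak{L}^+ \hookrightarrow \mathfrak{L}$ is exactly the map inducing $\iota_0$ on $\pi_d$.

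With $c_0 = 0$, Theorem~\ref{MainThmLeg} furnishes some $c_1 \ge 0$ for which $\iota_0$ restricts to an isomorphism from $\iota_{0,c_1}(\pi_d(\mathfrak{L}^{c_1}))$ onto $\pi_d(\mathfrak{L})$, and for which $\iota_{0,c}(\pi_d(\mathfrak{L}^c)) = \iota_{0,c_1}(\pi_d(\mathfrak{L}^{c_1}))$ for every $c \ge c_1$ and every basepoint of sufficiently large power. This is already the displayed isomorphism of the corollary, so the only remaining task is to pin down an explicit admissible threshold for ``sufficiently large $c > 0$''.

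For that I would revisit the proof of Theorem~\ref{MainThm} in its Legendrian incarnation. The condition that makes a value $c_1$ admissible there is that composing a sphere lying in $\mathfrak{L}^{c_1}$ with the powerful loop $\{\varphi^N_t\}$ of Theorem~\ref{prop:verypositiveloop} keeps it inside $\mathfrak{L}^{c_0}$; by the power estimate coming from \eqref{eq:compositionLeg} (the Legendrian analogue of \eqref{eq:cotapowerhomotopia}) this requires $\epsilon + \min\{\lambda c_1, \lambda' c_1\} \ge c_0 = 0$. Since $\epsilon < 0 < \lambda \le \lambda'$, the minimum equals $\lambda c_1$, so the inequality reads $c_1 \ge -\epsilon/\lambda$, a positive number. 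Thus any $c \ge -\epsilon/\lambda$ is admissible, which yields the ``for every sufficiently large $c>0$'' in the statement. I do not expect a genuine obstacle here: the corollary carries no content beyond Theorem~\ref{MainThmLeg}, and the argument is the same one-line reduction as for Corollary~\ref{CorollaryPositive}, the only extra ingredient being the remark $\mathfrak{L}^+ = \mathfrak{L}^0$ together with the bookkeeping of the constant $-\epsilon/\lambda$.
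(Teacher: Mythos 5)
Your proposal is correct and matches the paper's own derivation: the corollary is obtained by setting $c_0=0$ in Theorem~\ref{MainThmLeg}, identifying $\mathfrak{L}^+=\mathfrak{L}^0$, and noting (as in the proof of Corollary~\ref{CorollaryPositive}) that $c\ge -\epsilon/\lambda$ suffices.
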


\end{document}